\documentclass{amsart}

\usepackage{amsfonts}
\usepackage{amscd}
\usepackage{amssymb}
\usepackage{mathrsfs}
\usepackage{amsmath}
\usepackage{amsthm}
\usepackage{latexsym}
\usepackage[T1]{fontenc}
\usepackage[all]{xy}

\theoremstyle{plain}
\newtheorem{theorem}{Theorem}[section]
\newtheorem{corollary}[theorem]{Corollary}
\newtheorem{lemma}[theorem]{Lemma}
\newtheorem{proposition}[theorem]{Proposition}

\theoremstyle{definition}

\newtheorem{remark}[theorem]{Remark}

\newcommand{\Fix}{{\textup{Fix}}}
\newcommand{\Aper}{{\textup{Aper}}}
\newcommand{\Per}{{\textup{Per}}}

\DeclareMathOperator{\supp}{supp}
\DeclareMathOperator{\id}{id}
\DeclareMathOperator{\AC}{AC}

\numberwithin{equation}{section}

\begin{document}

\title[On the Banach $^*$-algebra associated with a dynamical system]{On the Banach $^*$-algebra crossed product associated with a topological dynamical system}

\author{Marcel de Jeu}
\address{Marcel de Jeu, Mathematical Institute, Leiden University, P.O.\ Box 9512, 2300 RA Leiden, The Netherlands}
\email{mdejeu@math.leidenuniv.nl}

\author{Christian Svensson}
\address{Christian Svensson, Mathematical Institute, Leiden University, P.O.\ Box 9512, 2300 RA Leiden, The Netherlands, and Centre for Mathematical Sciences, Lund University, Box 118, SE-221 00 Lund, Sweden}
\email{svensson.pc@gmail.com}

\author{Jun Tomiyama}
\address{Jun Tomiyama, Department of Mathematics, Tokyo Metropolitan University, Minami-Osawa, Hachioji City, Japan}
\email{juntomi@med.email.ne.jp}

\keywords{Banach algebra; topological dynamical system; crossed product; ideal structure}

\begin{abstract}
Given a topological dynamical system $\Sigma = (X, \sigma)$, where $X$ is a compact Hausdorff space and $\sigma$ a homeomorphism of $X$, we introduce the associated Banach $^*$-algebra crossed product $\ell^1 (\Sigma)$ and analyse its ideal structure. This algebra is the Banach algebra most naturally associated with the dynamical system, and it has a richer structure than its well studied $C^*$-envelope, as becomes evident from the possible existence of non-self-adjoint closed ideals. This paper initiates the study of these algebras and links their ideal structure to the topological dynamics. It is determined when exactly the algebra is simple, or prime, and when there exists a non-self-adjoint closed ideal. In addition, a structure theorem is obtained for the case when $X$ consists of one finite orbit, and the algebra is shown to be Hermitian if $X$ is finite. The key to these results lies in analysing the commutant of $C(X)$ in the algebra, which can be shown to be a maximal abelian subalgebra with non-zero intersection with each non-zero closed ideal.
\end{abstract}

\maketitle

\section{Introduction}\label{sec:introduction}

Whenever a locally compact group acts on a locally compact Hausdorff space, a $C^*$-algebra crossed product can be associated with this topological dynamical system, as a special case of the general $C^*$-crossed product construction for a group acting on a $C^*$-algebra \cite{williams}. If the space $X$ is compact, and the group consists of the integers acting via iterations of a given homeomorphism $\sigma$ of the space, the relation between the dynamics of the system $\Sigma=(X,\sigma)$ and the structure of the associated crossed product $C^*(\Sigma)$ is particularly well studied; see, e.g., \cite{tomiyama_book}, \cite{tomiyama_seoul_notes}, and \cite{tomiyama_kyoto_notes}, for a non-limitative introduction to the field and its authors.

The algebra $C^*(\Sigma)$, however, is not the Banach algebra most naturally associated with $\Sigma$. That predicate belongs to $\ell^1(\Sigma)$, the $\ell^1$-algebra of crossed product type of which $C^*(\Sigma)$ is the enveloping $C^*$-algebra. It is only natural to ask what the relation is between the structure of $\ell^1(\Sigma)$ and the dynamics of $\Sigma$, yet this matter has not been taken up so far. This is done in the present paper, which, apart from broadening our knowledge on the interplay between topological dynamics and Banach algebras, can also be seen as the study of a concrete class of involutive Banach algebras other than $C^*$-algebras, supplementing the general theory as can be found in, e.g., \cite{fragoulopoulou} and \cite{palmer}.  We show, for example, that $\ell^1(\Sigma)$ is simple precisely when $X$ is infinite and $\Sigma$ is minimal, and that it is prime precisely when $X$ is infinite and $\Sigma$ is topologically transitive. Quite contrary to $C^*(\Sigma)$, it is possible that $\ell^1(\Sigma)$ has a non-self-adjoint closed ideal: this is the case precisely when $\Sigma$ is not free.

At first sight, given the formal resemblance of some of the above results with those already known for $C^*(\Sigma)$, one might expect that the proofs are merely adaptations of the existing proofs for $C^*(\Sigma)$ to the situation of $\ell^1(\Sigma)$, which, after all, is also an involutive Banach algebra with a theory of states and Hilbert space representations available, strongly connected to that for $C^*(\Sigma)$. However, contrary to our own expectations, we have not been able to obtain satisfactory proofs along those lines, the reason for which lies in the fact that the notion of positivity in general Banach $^*$-algebras is more delicate than for $C^*$-algebras. Another conceivable attempt to benefit from what is already known for $C^*(\Sigma)$, would be to translate results on closed ideals of $C^*(\Sigma)$ back to results on closed ideals of $\ell^1(\Sigma)$. However, the relation between closed ideals in both algebras is presently not clear enough to make this work. For example: is it true that the closure in $C^*(\Sigma)$ of a proper closed ideal of $\ell^1(\Sigma)$ is always proper again? Clearly one would need to know this for such a translation approach, but at the time of writing this matter is open. The key proofs in the present paper, therefore, are fundamentally different from those for $C^*(\Sigma)$. States nor positivity are used, and, in fact, the involutive structure of $\ell^1(\Sigma)$ plays a very modest role in the proofs indeed.

The algebras $\ell^1(\Sigma)$ have a rich structure, much richer than their $C^*$-envelopes, a circumstance which may account for them having received relatively little attention thus far. For example, if $X$ consists of one point, then $\ell^1(\Sigma)$ is the usual group algebra $\ell^1(\mathbb Z)$, and its $C^*$-envelope is $C(\mathbb T)$. Whereas the latter can be considered as a rather accessible Banach algebra, the closed ideals of which are easily explicitly described, the algebra $\ell^1(\mathbb Z)$ is considerably more complicated, as is, e.g., demonstrated by the failure of spectral synthesis, and the existence of a non-self-adjoint closed ideal. It is therefore to be expected that, for general compact $X$, when the dynamics has an actual role to play, phenomena will emerge in $\ell^1(\Sigma)$, which do not occur in $C^*(\Sigma)$. An example of this can already be found in this paper, and is a generalisation of the result for $\ell^1(\mathbb Z)$ above: as already mentioned, $\ell^1(\Sigma)$ can have a non-self-adjoint closed ideal. Another intriguing non-$C^*$-question is whether $\ell^1(\Sigma)$ is always a Hermitian Banach $^*$-algebra, or, if not, which conditions on the dynamics are equivalent with this property. In this paper, we take a first step investigating this matter. Combining a structure theorem obtained in this paper with Wiener's classical result on the reciprocal of a function on the torus with an absolutely convergent Fourier series, we show that, for finite $X$, $\ell^1(\Sigma)$ is always Hermitian.

There was some a priori evidence available that it could be possible to obtain relations between the structure of algebras of crossed product type associated with $\Sigma$ and the dynamics of $\Sigma$, outside the $C^*$-context. Indeed, it is shown in \cite{SSdJ_IJM}, \cite{SSdJ_Banff}, and \cite{SSdJ_Lund}, that a number of connections between topological dynamics and $C^*$-algebras in the literature have an analogue for a certain $^*$-algebra $c_{00} (\Sigma)$, which is dense in $\ell^1(\Sigma)$ and (hence) in $C^* (\Sigma)$. Although these results are of a purely algebraic nature, they hint that results in this vein may be obtainable in a broader analytical context than $C^*(\Sigma)$. The present paper may serve to show that, with new techniques, this is actually possible.

We conclude this introduction with an overview of the paper.

In Section~\ref{sec:preliminaries} we collect the basic definitions and notations, introduce two algebras associated with $\Sigma$ and representations thereof, and include a few technical preparations on principal ideals.

Section~\ref{sec:commutant} is the mathematical backbone of the paper. The commutant of $C(X)$ in $\ell^1(\Sigma)$ is analysed, and it is shown that it is a maximal abelian subalgebra having non-zero intersection with every non-zero closed ideal of $\ell^1(\Sigma)$. The latter important property, which requires some effort to establish, was discovered by Svensson in an algebraic context, where the proof is much easier, cf.\ \cite[Theorem~6.1]{SSdJ_Lund}, and \cite[Theorem~3.1]{SSdJ_Banff}. It has an analogue for $C^*(\Sigma)$, cf.~\cite[Corollary~5.4]{ST_JFA}, and has provided a fruitful angle to study the structure of algebras associated with $\Sigma$.

Section~\ref{sec:ideal_structure} contains the actual results on the relation between the ideal structure of $\ell^1(\Sigma)$ and the dynamics of $\Sigma$. The intersection property for the commutant of $C(X)$ is easily translated to a condition (topologically freeness of $\Sigma$) for the similar property to hold for $C(X)$ itself, and this in turn is a vital ingredient for the results in the remainder of that Section. This logical build-up is inspired by  \cite{SSdJ_Banff}, \cite{SSdJ_Lund}, and \cite{ST_JFA}. Amongst others, the existence of a non-self-adjoint closed ideal of $\ell^1(\Sigma)$, and simplicity and primeness of $\ell^1(\Sigma)$ are considered. Since closed ideals are no longer necessarily self-adjoint, there are now also natural notions of $^*$-simplicity and $^*$-primeness, but, interestingly enough, these notions turn out to coincide with the non-involutive notions. We also obtain a structure theorem for $\ell^1(\Sigma)$ when $X$ consists of one finite orbit, which, when combined with Wiener's classical result as already mentioned, implies that $\ell^1(\Sigma)$ is Hermitian if $X$ is finite.
 
\section{Definitions and preliminaries}\label{sec:preliminaries}

In this section, we collect a number of definitions and preliminary results on the dynamics of a topological system, two involutive algebras associated with such a system, and representations of these algebras.
Included are also technical preparations on principal ideals of these algebras, which will be an important tool when establishing the main result in Section~\ref{sec:commutant}, Theorem~\ref{thm:commutant_intersection_property}. In view of the technical nature of these preparations, we have included them in this preliminary section in order not to interrupt the exposition later on.

Throughout this paper, $X$ denotes a non-empty compact Hausdorff space. For a subset $S$ of $X$ we write its interior as $S^\circ$, its closure as $\bar{S}$, and its complement as $S^c$. We will be concerned with a topological dynamical system $\Sigma = (X, \sigma)$, where $\sigma : X \rightarrow X$ is a homeomorphism.

\subsection{Dynamics}\label{subsec:dynamics}
We let $\mathbb{Z}$ act on $X$ via iterations of $\sigma$, and, for $n\in\mathbb Z$, we let $\Fix_n(\sigma) = \{ x \in X : \sigma^n(x) = x\} = \Fix_{-n}(\sigma)$ denote the fixed points of $\sigma^n$. Note that the sets $\Fix_n(\sigma)$ are closed for all $n$, and that they are invariant under the $\mathbb Z$-action. For $n\geq 1$, we let $\Per_n(\sigma)$ denote the set of points with period precisely $n$. The sets $\Per_n(\sigma)$ are invariant under the $\mathbb Z$-action. If $x\in\Fix_n(\sigma)$, for some $n\neq 0$, then $x\in\Fix_{jn}(\sigma)$ for all $j\in\mathbb Z$, and $x\in\Per_k(\sigma)$ for a unique $k\geq 1$; this $k$ divides $n$. Let $\Per(\sigma)=\bigcup_{n=1}^\infty \Fix_n(\sigma)$ be the set of periodic points, and let $\Aper(\sigma)$ be the set of aperiodic points. Hence $X=\Aper(\sigma)\cup\Per(\sigma)=\Aper(\sigma)\cup\bigcup_{n=1}^\infty \Fix_n(\sigma)=\Aper(\sigma)\bigcup_{n=1}^\infty \Per_n(\sigma)$.

If $\Aper(\sigma) = X$, then $\Sigma$ is called free, and if $\Aper(\sigma)$ is dense in $X$, then $\Sigma$ is called topologically free.

For a point $x \in X$, we denote by $\mathcal{O}_{\sigma} (x) = \{\sigma^n (x) : n \in \mathbb{Z}\}$ the orbit of $x$, and we recall that $\Sigma$ is called minimal if $\mathcal{O}_\sigma$ is dense in $X$, for every $x$ in $X$. The system is called topologically transitive if, for any pair of non-empty open subsets $U, V$ of $X$, there exists an integer $n$ such that $\sigma^n (U) \cap V \neq \emptyset$.

The following topological lemma will be used several times in this paper. It is based on the category theorem for (locally) compact Hausdorff spaces \cite[Theorem 2.2]{rudin_functional_analysis}. We include the easy proof of the first part (which occurs as Lemma~3.1 in \cite{ST_JFA}) for the convenience of the reader, and prefer to give a direct argument for the second part, rather than considering it as a consequence of the third part, the proof of which is more involved.

\begin{lemma}\label{lem:topological} \quad
\begin{enumerate}
\item $X$ is topologically free if and only if $\Fix_n(\sigma)^\circ=\emptyset$ for all $n\geq 1$.
\item The union of $\Aper(\sigma)$ and $\bigcup_{n=1}^\infty \Fix_n(\sigma)^\circ$ is dense in $X$.
\item The union of $\Aper(\sigma)$ and $\bigcup_{n=1}^\infty \Per_n(\sigma)^\circ$ is dense in $X$.
\end{enumerate}
\end{lemma}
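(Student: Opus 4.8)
The plan is to derive all three parts from the Baire category theorem, which is available here because $X$ is compact Hausdorff and, more generally, because every non-empty open subset of $X$ is locally compact Hausdorff and therefore a Baire space; this last observation is what lets the argument localize to arbitrary open sets. A small but repeatedly used point is that if $U$ is open in $X$ and $V \subseteq U$, then the interior of $V$ relative to $U$ coincides with its interior in $X$, so interiors produced by Baire inside an open set are genuinely open in $X$.

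For part (1), I would use that $\Aper(\sigma)$ is dense exactly when its complement $\Per(\sigma) = \bigcup_{n=1}^\infty \Fix_n(\sigma)$ has empty interior. If every $\Fix_n(\sigma)^\circ = \emptyset$, then since the $\Fix_n(\sigma)$ are closed, Baire gives $\Per(\sigma)^\circ = \emptyset$, hence density of $\Aper(\sigma)$; conversely $\Fix_n(\sigma) \subseteq \Per(\sigma)$ forces each $\Fix_n(\sigma)^\circ = \emptyset$ as soon as $\Per(\sigma)^\circ = \emptyset$. For part (2), I would argue by contradiction: if $D = \Aper(\sigma) \cup \bigcup_n \Fix_n(\sigma)^\circ$ is not dense, choose a non-empty open $U$ disjoint from $D$. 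Then $U \subseteq \Per(\sigma)$, so $U = \bigcup_n (\Fix_n(\sigma) \cap U)$ covers the Baire space $U$ by relatively closed sets, whence some $\Fix_n(\sigma) \cap U$ has non-empty interior in $U$. By the localization remark this interior is open in $X$ and contained in $\Fix_n(\sigma)^\circ \cap U$, contradicting $U \cap D = \emptyset$.

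The main obstacle is part (3): the sets $\Per_n(\sigma)$ are not closed, so Baire cannot be applied to them directly and one must descend through the closed sets $\Fix_n(\sigma)$ while controlling the period. I would isolate the key claim that every non-empty open $V \subseteq \Fix_m(\sigma)$ meets $\bigcup_k \Per_k(\sigma)^\circ$, and prove it by strong induction on $m$. Within such a $V$ the set $\Per_m(\sigma) \cap V = V \setminus \bigcup_{e \mid m,\, e < m} \Fix_e(\sigma)$ is $V$ minus a closed set, hence relatively open and therefore open in $X$; if it is non-empty it lies in $\Per_m(\sigma)^\circ$ and the claim holds. Otherwise $V$ is covered by the finitely many closed sets $\Fix_e(\sigma)$ with $e \mid m$, $e < m$, so Baire yields a non-empty open $V_1 \subseteq V$ contained in some $\Fix_e(\sigma)$ with $e < m$, and the induction hypothesis applies to $V_1$; the base case $m = 1$ is immediate since there $\Per_1(\sigma) \cap V = V$. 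The descent terminates because each step passes to a strictly smaller divisor.

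Granting the claim, part (3) follows by the same contradiction scheme as part (2): a non-empty open $U$ disjoint from $\Aper(\sigma) \cup \bigcup_k \Per_k(\sigma)^\circ$ would satisfy $U \subseteq \Per(\sigma)$, so by part (2) it would meet some $\Fix_m(\sigma)^\circ$; applying the claim to the non-empty open set $V = U \cap \Fix_m(\sigma)^\circ \subseteq \Fix_m(\sigma)$ produces a point of $U$ lying in some $\Per_k(\sigma)^\circ$, which is the desired contradiction. The delicate bookkeeping to watch throughout the descent is that every interior taken remains open in $X$, which is guaranteed precisely because each stage $V$, $V_1, \dots$ is open; losing openness at any step would break the passage between relative and absolute interiors on which the whole argument rests.
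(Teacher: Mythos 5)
Your proposal is correct. For parts (1) and (2) it coincides with the paper's own argument: part (1) is the same category-theorem equivalence (the paper phrases it via the dense open sets $\Fix_n(\sigma)^c$, you via the complement $\Per(\sigma)$ having empty interior, which is the same statement), and part (2) is the identical contradiction scheme --- a non-empty open set $U$ disjoint from the union lies in $\Per(\sigma)$, is locally compact Hausdorff and hence a Baire space, and is covered by the relatively closed sets $\Fix_n(\sigma)\cap U$, so one of them has non-empty relative interior, which is open in $X$ and contradicts $U\cap\Fix_n(\sigma)^\circ=\emptyset$. The genuine difference is part (3), which the paper does not prove but delegates to \cite[Lemma~2.1]{ST_JFA}; you supply a complete self-contained argument. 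Your key claim --- every non-empty open $V\subseteq\Fix_m(\sigma)$ meets $\bigcup_k\Per_k(\sigma)^\circ$ --- and its proof by descent through the divisors of $m$ are sound: inside such a $V$ one has $\Per_m(\sigma)\cap V=V\setminus\bigcup_{e\mid m,\,e<m}\Fix_e(\sigma)$, which is open in $X$ because the excluded set is a finite union of closed sets and $V$ is open in $X$; if this set is empty, then $V$ is covered by the finitely many closed sets $\Fix_e(\sigma)$ with $e$ a proper divisor of $m$, so one of them contains a non-empty open subset of $V$, and the strong induction hypothesis applies to the strictly smaller period $e$, with the base case $m=1$ immediate. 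Combined with part (2) and the same contradiction scheme, this yields (3). What your route buys is that the lemma becomes fully self-contained; the only delicate point is the one you flag yourself, namely that every interior taken along the descent is an interior relative to an open subset of $X$ and hence open in $X$, and you handle this correctly.
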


\begin{proof} As to the first part, note that $\Aper(\sigma)=\bigcap_{n=1}^\infty \Fix_n(\sigma)^c$. Since the sets $\Fix_n(\sigma)^c$ are all open, the category theorem yields that $\Aper(\sigma)$ is dense if and only if $\Fix_n(\sigma)^c$ is dense for all $n\geq 1$, i.e., if and only if $\Fix_n(\sigma)^\circ=\emptyset$ for all $n\geq 1$.

As to the second part, let
\[
Y=\overline{\Aper(\sigma)\cup\bigcup_{n=1}^\infty \Fix_n(\sigma)^\circ}^{\,\,c}.
\]
Suppose that $Y\neq\emptyset$. Since $Y\subset\Per(\sigma)$, $Y=\bigcup_{n=1}^\infty Y\cap\Fix_{n}(\sigma)$. Now $Y$, being open in $X$, is a locally compact Hausdorff space in the induced topology, so the category theorem shows that there exists $n_0\geq 1$ such that the $Y$-closure of $Y\cap\Fix_{n_0}(\sigma)$ has non-empty $Y$-interior. Since $\Fix_{n_0}(\sigma)$ is closed in $X$, $Y\cap\Fix_{n_0}(\sigma)$ is closed in $Y$, and since $Y$ is open in $X$, a $Y$-open subset of $Y$ is open in $X$. We conclude that there exists a non-empty open subset $U$ of $X$ such that $U\subset Y\cap\Fix_{n_0}(\sigma)$. Hence $Y\cap\Fix_{n_0}(\sigma)^\circ\supset U\neq\emptyset$, which contradicts that $Y\cap\Fix_n(\sigma)^\circ=\emptyset$ for all $n\geq 1$ by construction.

The statement in the third part is \cite[Lemma~2.1]{ST_JFA}, and we refer to that paper for its proof.
\end{proof}

\subsection{Algebras}\label{subsec:algebras}
Let $\alpha$ be the automorphism of $C(X)$ induced by $\sigma$ via $\alpha(f) = f \circ \sigma^{-1}$, for $f \in C(X)$. Via $n \mapsto \alpha^n$, the integers act on $C(X)$ by iterations.
Given a topological dynamical system $\Sigma$, we endow the Banach space
\[
\ell^1 (\Sigma) = \{a: \mathbb{Z} \rightarrow C(X) : \Vert a\Vert= \sum_{k \in \mathbb{Z}} \|a(k)\|_{\infty} < \infty\},
\]
where $\|\cdot\|_{\infty}$ denotes the supremum norm on $C(X)$, with a multiplication and an involution. It will then be a unital Banach $^*$-algebra of crossed product type with an isometric involution.
The Banach space structure on $\ell^1 (\Sigma)$ is the natural pointwise one, and multiplication is defined by twisted convolution, as follows:
\[
(ab) (n) = \sum_{k \in \mathbb{Z}} a(k) \cdot \alpha^k (b(n-k)),
\]
for $a, b \in \ell^1 (\Sigma)$. We define the involution, $^*$, by
\[a^* (n) = \overline{\alpha^n (a(-n))},\]
for $a \in \ell^1 (\Sigma)$. The bar denotes the usual pointwise complex conjugation on $C(X)$. It is then routine to check that, when endowed with these operations, $\ell^1 (\Sigma)$ is indeed a unital Banach $^*$-algebra with isometric involution, and that the norm of the identity element (which maps $0\in\mathbb Z$ to $1\in C(X)$ and is zero elsewhere) is equal to one.

A useful way of working with $\ell^1 (\Sigma)$ is provided by the following. For $n,m \in \mathbb{Z}$, let
\begin{equation*}
  \chi_{\{n\}} (m) =
  \begin{cases}
    1 &\text{if }m =n;\\
    0 &\text{if }m \neq n,
  \end{cases}
\end{equation*}
where the constants denote the corresponding constant functions in $C(X)$. Then $\chi_{\{0\}}$ is the identity element of $\ell^1(\Sigma)$. Let $\delta = \chi_{\{1\}}$; then $\chi_{\{-1\}}=\delta^{-1}=\delta^*$. If we put $\delta^0=\chi_{\{0\}}$, one easily sees that $\delta^n = \chi_{\{n\}}$, for all $n \in \mathbb{Z}$. Hence $\Vert\delta^n\Vert=1$, for all $n\in\mathbb Z$. We may canonically view $C(X)$ as a closed abelian $^*$-subalgebra of $\ell^1 (\Sigma)$, namely as $\{a \delta^0 \, : \, a \in C(X)\}$. If $a \in \ell^1 (\Sigma)$, and if we write $a_k = a(k)$ for short, then $a= \sum_{k \in \mathbb{Z}} a_k\chi_{\{0\}} \chi_{\{k\}}$, where the series is absolutely convergent in $\ell^1(\Sigma)$. Hence, if we identify $a_k\chi_{\{0\}}\in\ell^1(\Sigma)$ and $a_k\in C(X)$, we have $a=\sum_{k\in\mathbb Z}a_k\delta^k$ as an absolutely convergent series in $\ell^1(\Sigma)$.
In the rest of this paper we will constantly use this expansion $a= \sum_{k\in\mathbb Z}a_k\delta^k$ of an arbitrary element $a\in\ell^1(\Sigma)$ as an absolutely convergent series, and we note that the $a_k$ are uniquely determined.
Thus $\ell^1 (\Sigma)$ is generated as a unital Banach algebra by an isometrically isomorphic copy of $C(X)$ and the elements $\delta$ and $\delta^{-1}$, subject to the relation $\delta f \delta^{-1} = \alpha(f)=f\circ\sigma^{-1}$, for $f \in C(X)$. The isometric involution is determined by $f^*=\overline f$, for $f\in C(X)$, and $\delta^*=\delta^{-1}$.

We let $c_{00}(\Sigma)$ denote the elements of $\ell^1(\Sigma)$ with finite support, i.e., the elements of the form $a=\sum_{k\in S} f_k \delta^k$, where $S\subset\mathbb Z$ is finite. This algebra and its generalisations are studied in \cite{SSdJ_IJM}, \cite{SSdJ_Banff}, and
\cite{SSdJ_Lund}. The algebra $c_{00}(\Sigma)$ is a dense unital $^*$-subalgebra of $\ell^1 (\Sigma)$, and as an associative algebra it is generated by an isometrically $^*$-isomorphic copy of $C(X)$, and the elements $\delta$ and $\delta^{-1}$, subject again to the relation $\delta f \delta^{-1} = \alpha(f)=f\circ\sigma^{-1}$, for $f \in C(X)$. The isometric involution is again determined by $f^*=\overline f$, for $f\in C(X)$, and $\delta^*=\delta^{-1}$.

We let $E:\ell^1(\Sigma)\to C(X)$ denote the canonical unital norm one projection, defined by $E(a)=a_0$, for $a=\sum_{k\in\mathbb Z}a_k\delta^k\in\ell^1(\Sigma)$. Note that $E(fag)=fE(a)g$, for all $a\in\ell^1(\Sigma)$, and $f,g\in C(X)$. If $a= \sum_{k \in \mathbb{Z}} a_k \delta^k\in\ell^1(\Sigma)$, then $E(a^*a)=\sum_{k\in\mathbb Z}|f_k\circ\sigma^k|^2$. Hence $E(a^*a)=0$ if and only if $a=0$. Note also that if $I$ is an ideal and $E(I) = \{0\}$, then $I = \{0\}$. Indeed, for arbitrary $a\in I$ and $n\in\mathbb Z$ one then has $a_n = E(a\delta^{-n}) = 0$.

In the sequel, an ideal is always assumed to be two-sided, but not necessarily self-adjoint or closed.

\subsection{Representations}\label{subsec:representations}

Two families of unital contractive $^*$-representations of $\ell^1(\Sigma$) are naturally associated with the dynamics of the system, and we will now describe these. They will be used in Section~\ref{sec:ideal_structure}.

Firstly, for each $x\in X$, an infinite dimensional $^*$-representation $\pi_x$ in a separable Hilbert space $H$ with orthonormal basis $\{e_j\}_{j\in\mathbb Z}$ and bounded operators $B(H)$ can be defined, as follows. For $j\in\mathbb Z$ and $f\in C(X)$, let $\pi_x(f)e_j=f(\sigma^j x)e_j$. For $j\in\mathbb Z$, put $\pi_x(\delta)x_j=x_{j+1}$. Then $\pi_x(f)$ and $\pi_x(\delta)$ are bounded operators on $H$. Note that $\pi_x(\delta)$ is unitary, and, for $f\in C(X)$, that $\pi_x(\overline f)=\pi(f)^*$, and $\Vert \pi_x(f)\Vert\leq\Vert f\Vert_\infty$. For $a=\sum_{k \in \mathbb{Z}} a_k \delta^k\in\ell^1(\Sigma)$, define $\pi_x(a)=\sum_{k\in\mathbb Z} \pi_x(a_k)\pi_x(\delta)^k$. Since $\Vert \pi_x(a_k)\pi_x(\delta)^k\Vert\leq \Vert a_k\Vert_\infty$, the series is absolutely convergent in the operator norm, and $\pi_x:\ell^1(\Sigma)\to B(H)$ is contractive. A short calculation shows that $\pi_x$ preserves products of elements of the form $a_k\delta^k$ ($a_k\in C(X),\,k\in\mathbb Z$), which span $c_{00}(\Sigma)$. Hence $\pi_x:c_{00}(\Sigma)\to B(H)$ is a homomorphism. Moreover, since, for $f\in C(X)$, $\pi_x(\overline f)=\pi_x(f)^*$, and $\pi_x(\delta^*)=\pi_x(\delta^{-1})=\pi_x(\delta)^{-1}=\pi_x(\delta)^*$, as well as $\pi_x((\delta^{-1})^*)=\pi_x(\delta)=(\pi_x(\delta)^{-1})^*=\pi_x(\delta^{-1})^*$, $\pi_x$ preserves the involution for a set which generates $c_{00}(\Sigma)$ as an associative algebra. Hence it is a $^*$-homomorphism, and thus $\pi_x:c_{00}(\Sigma)\to B(H)$ is a unital contractive $^*$-representation of $c_{00}(\Sigma)$ in $H$. By density of $c_{00}(\Sigma)$ in $\ell^1(\Sigma)$, we conclude that, for $x\in X$, $\pi_x: \ell^1(\Sigma)\to B(H)$ is a unital contractive $^*$-representation of $\ell^1(\Sigma)$ in $H$.

Secondly, if $x\in\Fix_n(\sigma)$, for some $n\geq 1$, and $0\neq z\in\mathbb C$, then a representation $\pi_{x,n,z}$ of $c_{00}(\Sigma)$ in a Hilbert space $H_n$ with orthonormal basis $\{e_j\}_{j=0}^{n-1}$ and bounded operators $B(H_n)$ can be defined, as follows. For $j=0,\ldots,n-1$ and $f\in C(X)$, let $\pi_{x,n,z}(f)e_j=f(\sigma^j x)e_j$. For $j=0,\ldots,n-2$, put $\pi_{x,n,z}(\delta)e_j=e_{j+1}$, and let $\pi_{x,n,z}(\delta)e_{n-1}=ze_0$. Then $\pi_{x,n,z}(\delta)$ is invertible, so that, for $a=\sum_{k \in \mathbb{Z}} a_k \delta^k\in c_{00}(\Sigma)$, the definition $\pi_{x,n,z}(a)=\sum_{k\in\mathbb Z} \pi_{x,n,z}(a_k)\pi_{x,n,z}(\delta)^k$ is meaningful. Again, a short calculation shows that $\pi_{x,n,z}$ preserves products for elements of the form $a_k\delta^k$ ($a_k\in C(X),\,k\in\mathbb Z$), which span $c_{00}(\Sigma)$. Hence $\pi_{x,n,z}:c_{00}(\Sigma)\to B(H_n)$ is a homomorphism. Not all representations $\pi_{x,n,z}$ are bounded on $c_{00}(\Sigma)$. In fact, since, for $j\in\mathbb Z$, $\pi_{x,n,z}(\delta^{jn})=\pi_{x,n,z}(\delta)^{jn}= z^j\id_{H_n}$, for boundedness of the representation it is evidently necessary that $z\in\mathbb T$. This condition is also sufficient, so that $\pi_{x,n,z}$ is a bounded representation of $c_{00}(\Sigma)$ precisely when $z\in\mathbb T$. Indeed, for such $z$ one has $\Vert\pi_{x,n,z}(\delta)\Vert=\Vert\pi_{x,n,z}(\delta)^{-1}\Vert=1$, and this easily implies that $\pi_{x,n,z}$ is a contractive representation of $c_{00}(\Sigma)$. If $z\in\mathbb T$, and $a=\sum_{k \in \mathbb{Z}} a_k \delta^k\in\ell^1(\Sigma)$, the definition $\pi_{x,n,z}(a)=\sum_{k\in\mathbb Z} \pi_{x,n,z}(a_k)\pi_{x,n,z}(\delta)^k$ as a series which converges absolutely in the operator norm, yields a contractive map $\pi_{x,n,z}:\ell^1(\Sigma)\to B(H_n)$. Furthermore, if $z\in\mathbb T $, then $\pi_{x,n,z}(\delta)$ is unitary, and the same argument as for the representations $\pi_x$ above shows again that $\pi_{x,n,z}:c_{00}(\Sigma)\to B(H_n)$ is a $^*$-homomorphism. Thus $\pi_{x,n,z}:c_{00}(\Sigma)\to B(H)$ is a unital contractive $^*$-representation of $c_{00}(\Sigma)$ in $H_n$. By density of $c_{00}(\Sigma)$ in $\ell^1(\Sigma)$, we conclude that, for $n\geq 1$, $x\in\Fix_n(\sigma)$, and $z\in\mathbb T$, $\pi_{x,n,z}: \ell^1(\Sigma)\to B(H_n)$ is a unital contractive $^*$-representation of $\ell^1(\Sigma)$ in $H_n$.

\begin{remark}
Not all representations of $\ell^1(\Sigma)$ in these two families are irreducible. For example, if $X$ consists of one point $x_0$, so that $\ell^1(\Sigma)=\ell^1(\mathbb Z)$ is commutative and all irreducible representations must be one-dimensional, the only irreducible representations are the $\pi_{x_0,1,z}$, for $z\in\mathbb T$, which then correspond to evaluating the Fourier transform in $z$. In a future paper we will report separately on the irreducibility of these representations, and on their relation with the character space of the commutant of $C(X)$ in $\ell^1(\Sigma)$ and in $C^*(\Sigma)$. For the purpose of this paper, however, it is sufficient to merely establish the existence of the two families above.
\end{remark}

\subsection{Preparatory results on principal ideals}\label{subsec:technical_preparations}

As a preparation for the proofs leading to Theorem~\ref{thm:commutant_intersection_property}, we establish Corollary~\ref{cor:unimodular_functions} below. It is concerned with constructing a non-zero element in a principal ideal, such that certain of its coefficients are zero, its coefficient in degree zero is non-zero, and the supports of all coefficients are under control.

We start with a lemma.

\begin{lemma}\label{lem:unimodular_functions}
Let $x_0\in X$, $k_0\in\mathbb Z$, and $n_0\geq 0$. Suppose that $x_0\in\Fix_{n_0}(\sigma)^\circ$ and $\sigma^{k_0}(x_0)\neq x_0$. Then there exists an open neighbourhood $U$ of $x_0$, contained in $\Fix_{n_0}(\sigma)$, with the property that, for each $0\neq\lambda\in\mathbb C$, there exists a function $g\in C(X)$, such that $g$ is equal to 1 on $U$ and $g(x)\overline g (\sigma^{-k_0-jn_0}x)=\lambda$, for all $x\in U$, and all $j\in\mathbb Z$. If $\lambda\in\mathbb T$, then $g$ can be chosen to be unimodular on $X$.
\end{lemma}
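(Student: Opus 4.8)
The plan is to use the fixed-point hypothesis to collapse the family of conditions indexed by $j\in\mathbb Z$ into a single one, and then to realise the resulting requirement on $g$ by Urysohn's lemma. First I would note that, since $x_0\in\Fix_{n_0}(\sigma)^\circ$, there is an open neighbourhood $V$ of $x_0$ with $V\subset\Fix_{n_0}(\sigma)$. For every $x\in V$ one has $\sigma^{n_0}(x)=x$ (vacuously so when $n_0=0$, since then $\Fix_0(\sigma)=X$), hence $\sigma^{-jn_0}(x)=x$ and therefore $\sigma^{-k_0-jn_0}(x)=\sigma^{-k_0}(x)$ for all $j\in\mathbb Z$. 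Consequently, on any neighbourhood $U\subset V$ the requirement that $g(x)\overline g(\sigma^{-k_0-jn_0}x)=\lambda$ hold for all $j$ is equivalent to the single identity $g(x)\overline{g(\sigma^{-k_0}x)}=\lambda$; combined with the demand that $g\equiv 1$ on $U$, this reduces to asking that $g\equiv\overline\lambda$ on $\sigma^{-k_0}(U)$.

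Next I would arrange that $g$ may be prescribed independently on $U$ and on $\sigma^{-k_0}(U)$. The hypothesis $\sigma^{k_0}(x_0)\neq x_0$ forces $\sigma^{-k_0}(x_0)\neq x_0$, and since $X$ is compact Hausdorff, hence normal, the two distinct points $x_0$ and $\sigma^{-k_0}(x_0)$ can be separated by open sets $A\ni x_0$ and $B\ni\sigma^{-k_0}(x_0)$ with $\overline A\cap\overline B=\emptyset$. Setting $U=V\cap A\cap\sigma^{k_0}(B)$ then gives an open neighbourhood of $x_0$ contained in $\Fix_{n_0}(\sigma)$; from $U\subset A$ one has $\overline U\subset\overline A$, and from $U\subset\sigma^{k_0}(B)$ one has $\sigma^{-k_0}(U)\subset B$, whence $\overline{\sigma^{-k_0}(U)}\subset\overline B$. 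As $\overline A$ and $\overline B$ are disjoint, the closed sets $\overline U$ and $\overline{\sigma^{-k_0}(U)}$ are disjoint as well.

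It then remains to construct $g$. By Urysohn's lemma I would fix, once and for all (independently of $\lambda$), a continuous $h:X\to[0,1]$ with $h\equiv 0$ on $\overline U$ and $h\equiv 1$ on $\overline{\sigma^{-k_0}(U)}$. For a general $0\neq\lambda\in\mathbb C$, put $g=1+(\overline\lambda-1)h$; this is continuous, equals $1$ on $U$ and $\overline\lambda$ on $\sigma^{-k_0}(U)$, and hence satisfies the reduced identity. When $\lambda\in\mathbb T$, write $\overline\lambda=e^{i\theta}$ and instead put $g=e^{i\theta h}$; this is continuous, unimodular on all of $X$, and again equals $1$ on $U$ and $\overline\lambda$ on $\sigma^{-k_0}(U)$. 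The one genuinely non-routine step is the opening reduction: recognising that the fixed-point condition makes $\sigma^{-k_0-jn_0}$ agree with $\sigma^{-k_0}$ on $U$, so that the apparently infinite system of constraints over $j$ is in truth a single constraint. Once this is seen, separating $x_0$ from $\sigma^{-k_0}(x_0)$ by sets with disjoint closures and invoking Urysohn's lemma is routine.
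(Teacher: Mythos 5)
Your proof is correct and follows essentially the same route as the paper's: exploit $U\subset\Fix_{n_0}(\sigma)$ to collapse the conditions over $j$ to the single requirement at $\sigma^{-k_0}$, separate $x_0$ from $\sigma^{-k_0}(x_0)$ by sets with disjoint closures so that $g$ can be prescribed independently on $U$ and $\sigma^{-k_0}(U)$, and finish with Urysohn's lemma. The only difference is cosmetic: the paper uses the single formula $g=\exp(i\mu\tilde g)$ with $\mu$ complex in general and real when $\lambda\in\mathbb T$, whereas you use a linear interpolation for general $\lambda$ and an exponential for the unimodular case.
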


\begin{proof}
Since $\sigma^{-k_0}x_0\neq x_0$, separating these points by two disjoint open sets and then applying \cite[Theorem~5.18]{kelley} to each of these yields disjoint closed subsets $C$ and $C^\prime$, and open subsets $U$ and $U^\prime$, such that $x_0\in U\subset C$ and $\sigma^{-k_0}x_0\in U^\prime\subset C^\prime$. Replacing $U$ with $U\cap\Fix_{n_0}(\sigma)^\circ$, we may additionally assume that $U\subset\Fix_{n_0}(\sigma)$. Moreover, replacing $U$ with $U\cap\sigma^{k_0}U^\prime$, we may assume that $\sigma^{-k_0}U\subset C^\prime$. Then $U$ has the required properties. Indeed, if $0\neq\lambda\in\mathbb C$, choose $\mu\in\mathbb C$ such that $\exp (-i\overline\mu)=\lambda$. Urysohn's lemma provides a continuous function $\tilde g:X\to\mathbb R$, such that $\tilde g$ is equal to 0 on $C$ and equal to 1 on $C^\prime$. Then $g=\exp (i\mu\tilde g)$ is equal to 1 on $U$, and $g$ is unimodular on $X$ if $\lambda\in\mathbb T$. Furthermore, since $\sigma^{-jn_0}x=x$, for all $x\in U$ and all $j\in\mathbb Z$, we find that, for such $x$ and $j$, $g(x)\overline g (\sigma^{-k_0-jn_0}x)=1\cdot\exp (-i\overline\mu \tilde g(\sigma^{-k_0}x))=\exp (-i\overline\mu)=\lambda$.
\end{proof}

Next, we show how to construct an element in a principal ideal, such that certain of its coefficients vanish in a given point, while preserving the coefficient in degree zero.

\begin{proposition}\label{prop:unimodular_functions}
Let $x_0\in X$ and $n_0\geq 0$ be such that $x_0\in\Fix_{n_0}(\sigma)^\circ$. Suppose that, for some $N\geq 1$ and $k_1,\ldots,k_N\in\mathbb Z$, the points $\sigma^{k_1}x_0,\ldots,\sigma^{k_N}x_0$ are all different from $x_0$. Then there exist an open neighbourhood $U$ of $x_0$, contained in $\Fix_{n_0}(\sigma)^\circ$, and unimodular functions $\theta_1,\ldots,\theta_{2^N}\in C(X)$, which are equal to 1 on $U$, with the following property: If $a= \sum_{k \in \mathbb{Z}} a_k \delta^k\in\ell^1(\Sigma)$ is arbitrary, and $\frac{1}{2^N}\sum_{l=1}^{2^N} \theta_l a\overline \theta_l=\sum_{k\in\mathbb Z}a^\prime_k\delta^k$, then
\begin{enumerate}
\item $a^\prime_0=a_0$;
\item $a^\prime_{k_l+jn_0}(x)=0$, for all $x\in U$, $l=1,\ldots,N$, and $j\in\mathbb Z$.
\end{enumerate}

\end{proposition}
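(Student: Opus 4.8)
The plan is to recognise the averaging operation $a\mapsto\frac{1}{2^N}\sum_l\theta_l a\overline{\theta_l}$ as a coefficientwise multiplication operator and then to choose the $\theta_l$ so that the multipliers attached to the forbidden degrees vanish on $U$. First I would record how conjugation by a function acts on the expansion. Using $\delta^k\overline\theta=(\overline\theta\circ\sigma^{-k})\delta^k$, which follows from $\delta f\delta^{-1}=\alpha(f)$, one finds for $a=\sum_k a_k\delta^k$ and $\theta\in C(X)$ that $(\theta a\overline\theta)_k(x)=\theta(x)a_k(x)\overline{\theta}(\sigma^{-k}x)$, so that for any finite family of unimodular functions
\[
a_k'(x)=a_k(x)\cdot\frac{1}{2^N}\sum_{l}\theta_l(x)\,\overline{\theta_l}(\sigma^{-k}x).
\]
At $k=0$ the inner factor equals $\frac{1}{2^N}\sum_l|\theta_l(x)|^2=1$ for unimodular $\theta_l$, so (1) holds automatically, regardless of the precise choice. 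Thus the whole problem reduces to arranging that the multiplier $\frac{1}{2^N}\sum_l\theta_l(x)\overline{\theta_l}(\sigma^{-k}x)$ vanishes for $x\in U$ whenever $k=k_{l_0}+jn_0$.

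Next I would remove the parameter $j$. For $x\in U\subset\Fix_{n_0}(\sigma)$ one has $\sigma^{-jn_0}x=x$, hence $\sigma^{-(k_{l_0}+jn_0)}x=\sigma^{-k_{l_0}}x$, so the multiplier is independent of $j$ and the countable family of conditions collapses to one condition for each $l_0\in\{1,\ldots,N\}$. To build the functions I would invoke Lemma~\ref{lem:unimodular_functions} once for every $l$, with the roles $k_0=k_l$ and $\lambda=-1\in\mathbb T$; the standing hypothesis $\sigma^{k_l}x_0\neq x_0$ is exactly its input, and it returns a neighbourhood $U_l\subset\Fix_{n_0}(\sigma)$ together with a unimodular $g_l$ equal to $1$ on $U_l$ satisfying $g_l(x)\overline{g_l}(\sigma^{-k_l-jn_0}x)=-1$ there. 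Setting $U=\bigcap_{l=1}^N U_l$, which is open and contained in $\Fix_{n_0}(\sigma)$, hence in $\Fix_{n_0}(\sigma)^\circ$, all $g_l$ equal $1$ on $U$, and the lemma's relation then forces $\overline{g_l}(\sigma^{-k_l}x)=-1$ for $x\in U$.

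The decisive step is the assembly of the $2^N$ functions. I would index them by the $2^N$ subsets $S\subseteq\{1,\ldots,N\}$ and set $\theta_S=\prod_{l\in S}g_l$; each is unimodular and equal to $1$ on $U$. Fixing $l_0$ and $x\in U$ and writing $w_l=\overline{g_l}(\sigma^{-k_{l_0}}x)$, the fact that $\theta_S(x)=1$ makes the multiplier telescope,
\[
\frac{1}{2^N}\sum_{S\subseteq\{1,\ldots,N\}}\theta_S(x)\,\overline{\theta_S}(\sigma^{-k_{l_0}}x)=\frac{1}{2^N}\sum_{S}\prod_{l\in S}w_l=\frac{1}{2^N}\prod_{l=1}^N(1+w_l).
\]
The factor with $l=l_0$ equals $1+\overline{g_{l_0}}(\sigma^{-k_{l_0}}x)=1+(-1)=0$, so the product vanishes and $a_{k_{l_0}+jn_0}'(x)=0$ on $U$ for every $j$, which is (2). (No conflict with (1) arises, since $\sigma^{k_{l_0}}x_0\neq x_0$ rules out $k_{l_0}\equiv 0\pmod{n_0}$, hence $k_{l_0}+jn_0\neq0$.)

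The only genuinely non-formal point, and the reason behind the count $2^N$, is this assembly: there is no single unimodular $\theta$ whose conjugation annihilates several prescribed coefficients while preserving the one in degree zero, so I would instead average over all products $\prod_{l\in S}g_l$, converting the $N$ separate sign conditions $w_{l_0}=-1$ into the single identity $\sum_S\prod_{l\in S}w_l=\prod_l(1+w_l)$, in which each targeted degree is killed by its own vanishing factor. Everything else — absolute convergence of the series, continuity of the functions, and the verification that $\theta_S$ is unimodular and equal to $1$ on $U$ — is routine.
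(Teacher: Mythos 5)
Your proof is correct. The underlying mechanism is the same as in the paper --- conjugation by a unimodular $\theta$ multiplies the $k$-th coefficient by $\theta\cdot(\overline\theta\circ\sigma^{-k})$, one averages over a family of such conjugations built from Lemma~\ref{lem:unimodular_functions}, the degree-zero multiplier is identically $1$ by unimodularity, and the condition for $x\in U\subset\Fix_{n_0}(\sigma)$ collapses over $j$ --- but your combinatorial assembly is genuinely different. The paper argues by induction on $N$, applying Lemma~\ref{lem:unimodular_functions} with $\lambda=i$ at each step and using the identity $\tfrac12(ga\overline g+\overline g a g)=\sum_k a_k\cdot\mathrm{Re}\,(g\cdot(\overline g\circ\sigma^{-k}))\delta^k$, so that the targeted coefficient is killed because $\mathrm{Re}(i)=0$; unwinding the induction, its $2^N$ functions are the products $\prod_m g_m^{\pm}$ over all choices of $g_m$ versus $\overline{g_m}$. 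You instead take $\lambda=-1$, index the functions by subsets $S\subseteq\{1,\dots,N\}$ as $\theta_S=\prod_{l\in S}g_l$, and obtain the closed-form multiplier $2^{-N}\prod_{l=1}^N\bigl(1+w_l\bigr)$ on $U$, killed by the factor $1+w_{l_0}=0$. Your route avoids the induction and makes the vanishing completely explicit in one line; the paper's route keeps the single two-term identity \eqref{eq:basic_relation} as the only computation and re-applies it, at the cost of tracking the induction hypothesis. One small remark: your closing parenthetical about $k_{l_0}+jn_0\neq 0$ is a consistency check on the statement rather than a step of the proof, and your telescoping only uses $\theta_S(x)=1$ for $x\in U$ together with $w_{l_0}=-1$; the values $w_l$ for $l\neq l_0$ are irrelevant, as you implicitly use. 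No gaps.
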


\begin{proof}
As a preparation, one computes easily that
\begin{equation}\label{eq:basic_relation}
\frac{1}{2}\left(ga\overline g + \overline g a g\right) = \sum_{k\in\mathbb Z} a_k \cdot \textup{Re}\, (g\cdot (\overline g\circ\sigma^{-k}))\delta^k,
\end{equation}
for arbitrary $a= \sum_{k \in \mathbb{Z}} a_k \delta^k\in\ell^1(\Sigma)$, and $g\in C(X)$. We will use this relation to prove the statement by induction with respect to $N$.

For $N=1$, an application of Lemma~\ref{lem:unimodular_functions} (with $\lambda=i$) yields an open neighbourhood $U$ of $x_0$, contained in $\Fix_{n_0}(\sigma)^\circ$, and a unimodular function $g\in C(X)$, which is equal to 1 on $U$, such that $g(x)\overline g (\sigma^{-k_1-jn_0}x)=i$, for all $x\in U$, and $j\in\mathbb Z$. Hence \eqref{eq:basic_relation} shows that the statement holds for $N=1$, with $\theta_1=g$ and $\theta_2=\overline g$, where we note that the coefficient of $\delta^0$ in the right hand side of \eqref{eq:basic_relation} is $a_0$ unchanged, as a consequence of the unimodularity of $g$.

Assume then, that for $N\geq 2$ the statement holds for all $\sigma^{k_1}x_0,\ldots,\sigma^{k_{N-1}}x_0$ different from $x_0$, and that $\sigma^{k_1}x_0,\ldots,\sigma^{k_N}x_0$ are given, all different from $x_0$. By the induction hypothesis, there exist an open neighbourhood $\tilde U$ of $x_0$, contained in $\Fix_{n_0}(\sigma)^\circ$, and unimodular functions $\tilde\theta_1,\ldots,\tilde\theta_{2^{N-1}}\in C(X)$, which are equal to 1 on $\tilde U$, such that, if $a= \sum_{k \in \mathbb{Z}} a_k \delta^k\in\ell^1(\Sigma)$ is arbitrary, and $\frac{1}{2^{N-1}}\sum_{l=1}^{2^{N-1}} \tilde\theta_l a\overline {\tilde\theta}_l=\sum_{k\in\mathbb Z}\tilde a_k\delta^k$, then
\begin{enumerate}
\item $\tilde a_0=a_0$;
\item $\tilde a_{k_l+jn_0}(x)=0$, for all $x\in \tilde U$, $l=1,\ldots,N-1$, and $j\in\mathbb Z$.
\end{enumerate}
An application of Lemma~\ref{lem:unimodular_functions} (with $\lambda=i$) yields an open neighbourhood $V$ of $x_0$, contained in $\Fix_{n_0}(\sigma)^\circ$, and a unimodular function $g\in C(X)$, which is equal to 1 on $V$, such that $g(x)\overline g (\sigma^{-k_N-jn_0}x)=i$, for all $x\in V$, and $j\in\mathbb Z$. Using \eqref{eq:basic_relation} we find that
\begin{align}\label{eq:repeated_relation}
\frac{1}{2}&\left(g\left[\frac{1}{2^{N-1}}\sum_{l=1}^{2^{N-1}} \tilde\theta_l a\overline {\tilde\theta}_l\right]\overline g + \overline g\left[\frac{1}{2^{N-1}}\sum_{l=1}^{2^{N-1}} \tilde\theta_l a\overline {\tilde\theta}_l\right] g\right)\\
&=\frac{1}{2}\left(g\left[\sum_{k\in\mathbb Z}\tilde a_k\delta^k\right] \overline g + \overline g\left[\sum_{k\in\mathbb Z}\tilde a_k\delta^k\right] g \right)
=\sum_{k\in\mathbb Z} \tilde a_k \cdot \textup{Re}\, (g\cdot (\overline g\circ\sigma^{-k}))\delta^k.\notag
\end{align}
If we write the rightmost expression in $\eqref{eq:repeated_relation}$ as $\sum_{k \in \mathbb{Z}} a_k^\prime \delta^k$, then
\begin{enumerate}
\item $a_0^\prime=\tilde a_0=a_0$;
\item $a_{k_l+jn_0}^\prime(x)=0$, for all $x\in \tilde U$, $l=1,\ldots,N-1$, and $j\in\mathbb Z$, since $\tilde a_{k_l+jn_0}(x)=0$ for such $x,l$ and $j$;
\item $a_{k_N+jn_0}^\prime(x)=0$, for all $x\in V$, and $j\in\mathbb Z$, by the choice of $g$.
\end{enumerate}
Define $\theta_l=g\tilde\theta_l$ for $l=1,\ldots, 2^{N-1}$, and $\theta_l=\overline g\tilde\theta_{l-2^{N-1}}$ for $l=2^{N-1}+1,\ldots,2^N$. Then the left hand side in \eqref{eq:repeated_relation} is equal to $\frac{1}{2^N}\sum_{l=1}^{2^N} \theta_l a\overline\theta_l$, and we see that $U=\tilde U\cap V$ and $\theta_1,\ldots,\theta_{2^N}$ are as required.
\end{proof}

From the previous result, we can now infer the corollary we need.

\begin{corollary}\label{cor:unimodular_functions}
Let $x_0\in X$, $a=\sum_{k \in \mathbb{Z}} a_k \delta^k\in\ell^1(\Sigma)$, and suppose $a_0(x_0)\neq 0$.
\begin{enumerate}
\item Suppose that, for some $N\geq 1$ and $k_1,\ldots,k_N\in\mathbb Z$, the points $\sigma^{k_1}x_0,\ldots,\sigma^{k_N}x_0$ are all different from $x_0$. Then there exist an open neighbourhood $U$ of $x_0$, unimodular functions $\theta_1,\ldots,\theta_{2^N}\in C(X)$, which are equal to 1 on $U$, and a function $f\in C(X)$ with $0\leq f\leq 1$, $f(x_0)=1$, and support contained in $U$, such that, if $\frac{1}{2^N}\sum_{l=1}^{2^N} f\theta_l a \overline \theta_l=\sum_{k\in\mathbb Z}a^\prime_k\delta^k$, then
\begin{enumerate}
\item $a^\prime_0=fa_0\neq 0$;
\item $a^\prime_{k_l}=0$, for $l=1,\ldots,N$;
\item $a^\prime_k$ is supported in $U$, for all $k\in\mathbb Z$.
\end{enumerate}
\item If $x_0\in\Per_{n_0}(\sigma)^\circ$, for some $n_0\geq 2$, then there exist an open neighbourhood $U$ of $x_0$, contained in $\Per_{n_0}(\sigma)^\circ$, and unimodular functions $\theta_1,\ldots,\theta_{2^{n_0-1}}\in C(X)$, which are equal to 1 on $U$, and a function $f\in C(X)$ with $0\leq f\leq 1$, $f(x_0)=1$, and support contained in $U$, such that, if $\frac{1}{2^{n_0-1}}\sum_{l=1}^{2^{n_0-1}} f\theta_l a \overline \theta_l=\sum_{k\in\mathbb Z}a^\prime_k\delta^k$, then
\begin{enumerate}
\item $a^\prime_0=fa_0\neq 0$;
\item $a^\prime_{l+jn_0}=0$, for $l=1,\ldots,n_0 - 1$, and all $j\in\mathbb Z$.
\item $a^\prime_{jn_0}$ is supported in $U\subset\Per_{n_0}(\sigma)^\circ\subset\Fix_{n_0}(\sigma)\subset\Fix_{jn_0}(\sigma)$, for all $j\in\mathbb Z$.
\end{enumerate}
\end{enumerate}
\end{corollary}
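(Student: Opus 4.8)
The plan is to deduce both statements directly from Proposition~\ref{prop:unimodular_functions}, the only genuinely new ingredient being a cutoff function $f$, supplied by Urysohn's lemma, that converts the ``vanishing on $U$'' conclusions of the Proposition into honest vanishing of coefficients while simultaneously confining the supports of all coefficients to $U$. Throughout I use that left multiplication by $f\in C(X)$ acts coefficientwise, i.e.\ $f\sum_k b_k\delta^k=\sum_k fb_k\delta^k$, so that if the Proposition's intermediate element $b=\frac{1}{2^N}\sum_l\theta_l a\overline\theta_l$ has coefficients $\tilde a_k$, then $fb$ has coefficients $f\tilde a_k$.

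For part (1), I would apply Proposition~\ref{prop:unimodular_functions} with $n_0=0$; since $\Fix_0(\sigma)=X$, the hypothesis $x_0\in\Fix_0(\sigma)^\circ$ is automatic, and for this value of $n_0$ the indices $k_l+jn_0$ collapse to $k_l$. This yields $U$ and $\theta_1,\dots,\theta_{2^N}$ such that the coefficients $\tilde a_k$ of $b$ satisfy $\tilde a_0=a_0$ and $\tilde a_{k_l}|_U=0$ for $l=1,\dots,N$. I then choose, by Urysohn's lemma, $f\in C(X)$ with $0\le f\le1$, $f(x_0)=1$ and $\supp f\subset U$, and pass to $fb$. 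Its coefficients are $a_k^\prime=f\tilde a_k$, and now (a) holds because $(fa_0)(x_0)=a_0(x_0)\neq0$; (b) holds because $\tilde a_{k_l}$ vanishes on $U$ while $f$ vanishes off $U$, so $f\tilde a_{k_l}\equiv0$; and (c) holds because every $f\tilde a_k$ is supported in $\supp f\subset U$.

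For part (2), I would first record that $\Per_{n_0}(\sigma)^\circ$, being open and contained in $\Fix_{n_0}(\sigma)$, lies in $\Fix_{n_0}(\sigma)^\circ$, so $x_0\in\Fix_{n_0}(\sigma)^\circ$; and that, $x_0$ having period exactly $n_0$, the points $\sigma^1 x_0,\dots,\sigma^{n_0-1}x_0$ all differ from $x_0$. I can therefore invoke Proposition~\ref{prop:unimodular_functions} with $N=n_0-1$ and $k_l=l$ for $l=1,\dots,n_0-1$, obtaining $U\subset\Fix_{n_0}(\sigma)^\circ$ and $\theta_1,\dots,\theta_{2^{n_0-1}}$ for which the coefficients $\tilde a_k$ of $b$ satisfy $\tilde a_0=a_0$ and $\tilde a_{l+jn_0}|_U=0$ for $l=1,\dots,n_0-1$ and all $j$. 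Shrinking $U$ to $U\cap\Per_{n_0}(\sigma)^\circ$ is legitimate, since the $\theta_l$ remain equal to $1$ on the smaller set and the identities $\tilde a_{l+jn_0}(x)=0$ persist for all $x$ in it; choosing $f$ as before with support in this new $U$, the element $fb$ does the job. Here (a) and (b) follow exactly as in part (1), and for (c) I note that every integer is uniquely $l+jn_0$ with $0\le l\le n_0-1$, so the only coefficients of $fb$ not forced to vanish are those in degrees $jn_0$, which are supported in $\supp f\subset U\subset\Per_{n_0}(\sigma)^\circ\subset\Fix_{n_0}(\sigma)\subset\Fix_{jn_0}(\sigma)$.

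Finally, the $c_{00}(\Sigma)$ assertion in both parts is immediate: each $\theta_l a\overline\theta_l$ has $k$-th coefficient $\theta_l a_k\,\alpha^k(\overline\theta_l)$, which vanishes wherever $a_k$ does, so $\theta_l a\overline\theta_l$ has $\mathbb Z$-support contained in that of $a$; the same then holds for the average and for $f$ times the average. I do not anticipate a serious obstacle, as the substance is entirely in Proposition~\ref{prop:unimodular_functions}; the only points demanding care are the index bookkeeping (the collapse $k_l+jn_0\to k_l$ at $n_0=0$ in part (1), and the unique decomposition $k=l+jn_0$ in part (2)) and the verification that shrinking $U$ in part (2) preserves all conclusions of the Proposition.
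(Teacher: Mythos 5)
Your proof is correct and follows essentially the same route as the paper's: both parts invoke Proposition~\ref{prop:unimodular_functions} (with $n_0=0$ in part (1), and with $N=n_0-1$, $k_l=l$ in part (2), after shrinking $U$ to $U\cap\Per_{n_0}(\sigma)^\circ$), and then multiply on the left by a Urysohn cutoff $f$ supported in $U$ to turn vanishing on $U$ into identical vanishing and to confine all supports. Your filled-in details (that left multiplication by $f$ acts coefficientwise, the inclusion $\Per_{n_0}(\sigma)^\circ\subset\Fix_{n_0}(\sigma)^\circ$, and the $c_{00}(\Sigma)$ verification) are exactly what the paper leaves implicit.
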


\begin{proof}
For part (1), we start with an application of Proposition~\ref{prop:unimodular_functions} with $n_0=0$.  This yields an open neighbourhood $U$ of $x_0$ and unimodular functions $\theta_1,\ldots,\theta_{2^N}\in C(X)$, which are equal to one on $U$, such that, if $\frac{1}{2^N}\sum_{l=1}^{2^N} \theta_l a \overline \theta_l=\sum_{k\in\mathbb Z}\tilde a_k\delta^k$, then
\begin{enumerate}
\item[(a)] $\tilde a_0=a_0$;
\item[(b)] $\tilde a_{k_l}(x)=0$, for all $x\in U$, and $l=1,\ldots,N$.
\end{enumerate}
Next, choose $f\in C(X)$ with $0\leq f\leq 1$, $f(x_0)=1$, and supported in $U$. Then $\frac{1}{2^N}\sum_{l=1}^{2^N} f\theta_l a \overline \theta_l$ evidently has all the required properties, as $f \tilde a_{k_l}=0$, for $l=1,\ldots,N$. The statement for $c_{00}(\Sigma)$ is clear.

The proof of part (2) is a similar application of Proposition~\ref{prop:unimodular_functions}, which is applicable since $\Per_{n_0}(\sigma)^\circ\subset\Fix_{n_0}(\sigma)^\circ$ and the points $\sigma^1 x_0,\ldots, \sigma^{n_0-1}x_0$ are all different from $x_0$. One replaces $U$ as provided by Proposition~\ref{prop:unimodular_functions} with $U\cap\Per_{n_0}(\sigma)^\circ$, and subsequently chooses $f\in C(X)$ as above.
\end{proof}

\section{The commutant of $C(X)$}\label{sec:commutant}

The analysis of the commutant of $C(X)$ in $\ell^1 (\Sigma)$, denoted by $C(X)^\prime$,
and defined as
\[
C(X)^\prime = \left\{a \in \ell^1 (\Sigma) : af = fa \textup{ for all } f \in C(X)\right\},
\]
in this section is the basis for the results in Section~\ref{sec:ideal_structure}. Obviously, $C(X)^\prime$ is a unital Banach $^*$-subalgebra of $\ell^1 (\Sigma)$. What is less obvious, but not difficult to prove, is that it is actually commutative, and hence a maximal abelian subalgebra of $\ell^1(\Sigma)$ (Proposition~\ref{prop:commutant_abelian}). The actual landmark of this section, however, is the result that $C(X)^\prime\cap I\neq\{0\}$, for every non-zero closed ideal $I$ of $\ell^1(\Sigma)$ (Theorem~\ref{thm:commutant_intersection_property}).

We will now set out to establish these results, and we start with the following concrete description of $C(X)^\prime$.

\begin{proposition}\label{prop:commutant_description}
$C(X)^\prime = \{\sum_{k\in\mathbb Z} a_k \delta^k \in \ell^1 (\Sigma) : \supp (a_k)\subset \Fix_k(\sigma)  \textup{ for all }k\in\mathbb Z\}$.
Consequently, $C(X)^\prime= C(X)$ if and only if the dynamical system is topologically free.
\end{proposition}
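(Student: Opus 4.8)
The plan is to verify the set equality by a direct coefficient computation, and then read off the consequence using the topological lemma. The starting point is the commutation relation $\delta^k f = \alpha^k(f)\delta^k$ for $f \in C(X)$ and $k \in \mathbb{Z}$, which follows by iterating $\delta f\delta^{-1} = \alpha(f)$. Writing $a = \sum_{k\in\mathbb Z} a_k\delta^k$ and using absolute convergence to manipulate the series termwise, I would compute
\[
af = \sum_{k\in\mathbb Z} a_k\,\alpha^k(f)\,\delta^k, \qquad fa = \sum_{k\in\mathbb Z} (f a_k)\,\delta^k.
\]
Since the coefficients in the $\delta^k$-expansion are uniquely determined, $a \in C(X)^\prime$ is equivalent to $a_k\cdot\alpha^k(f) = f\cdot a_k$ in $C(X)$ for every $k$ and every $f\in C(X)$.

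Next I would make this condition pointwise: evaluating at $x\in X$ gives $a_k(x)\bigl(f(\sigma^{-k}x) - f(x)\bigr) = 0$. The key step is that $C(X)$ separates the points of the compact Hausdorff space $X$, so requiring $f(\sigma^{-k}x) = f(x)$ for all $f$ forces $\sigma^{-k}x = x$, i.e. $x\in\Fix_{-k}(\sigma) = \Fix_k(\sigma)$. Hence the condition holds for all $f$ exactly when $a_k$ vanishes off $\Fix_k(\sigma)$; as $\Fix_k(\sigma)$ is closed, this is the same as $\supp(a_k)\subset\Fix_k(\sigma)$. The reverse inclusion is immediate: if $\supp(a_k)\subset\Fix_k(\sigma)$, then $a_k(x)\neq 0$ forces $\sigma^{-k}x = x$, so each pointwise identity holds trivially. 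This establishes the asserted description.

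For the consequence, I observe that $C(X)^\prime = C(X)$ means precisely that every admissible $a$ has $a_k = 0$ for all $k\neq 0$, i.e. that for each $k\neq 0$ the zero function is the only $a_k\in C(X)$ with $\supp(a_k)\subset\Fix_k(\sigma)$. The elementary fact needed here is that a continuous function $g$ with $\supp(g)\subset F$, where $F$ is closed, is identically zero if and only if $F^\circ = \emptyset$: the set $\{g\neq 0\}$ is open and contained in $F$, so it is non-empty exactly when $F$ has non-empty interior, while conversely Urysohn produces a non-zero such $g$ whenever $F^\circ\neq\emptyset$. Thus $C(X)^\prime = C(X)$ is equivalent to $\Fix_k(\sigma)^\circ = \emptyset$ for all $k\neq 0$, and since $\Fix_k = \Fix_{-k}$ this is the same as $\Fix_n(\sigma)^\circ = \emptyset$ for all $n\geq 1$, which by Lemma~\ref{lem:topological}(1) is topological freeness.

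I do not anticipate a serious obstacle here; the computation is routine once the commutation relation is in hand. The two points requiring a little care are the use of separation of points to pass from the algebraic identity to the dynamical condition $\sigma^{-k}x = x$, and the distinction between the open set $\{a_k\neq 0\}$ and its closure $\supp(a_k)$ when phrasing the support condition — both are handled cleanly because $\Fix_k(\sigma)$ is closed.
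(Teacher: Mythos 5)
Your proposal is correct and follows essentially the same route as the paper: the same termwise computation of $af$ and $fa$ via $\delta^k f = (f\circ\sigma^{-k})\delta^k$, the same use of uniqueness of coefficients and point separation to get $\sigma^{-k}x = x$ wherever $a_k$ is non-zero, and the same Urysohn argument for the equivalence with topological freeness via Lemma~\ref{lem:topological}(1). No gaps.
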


\begin{proof}
The assertion is an adaptation of~\cite[Corollary 3.4]{SSdJ_IJM} to our context, and we include a proof here for the reader's convenience. Suppose $a = \sum_{k\in\mathbb Z} a_k \delta^k \in C(X)^\prime$. For any $f$ in $C(X)$ we have $fa=\sum_{k\in\mathbb Z} fa_k \delta^k$ and $af=\sum_{k\in\mathbb Z} a_k \delta^k f=\sum_{k\in\mathbb Z} a_k(f\circ\sigma^{-k}) \delta^k$. Hence $a\in C(X)^\prime$ if and only if $f(x)a_k(x)=f(\sigma^{-k}x)a_k(x)$, for all $k\in\mathbb Z$, $f\in C(X)$, and $x\in X$. Therefore, if $a_k (x)$ is non zero we have $f(x) =
 f(\sigma^{-k} x)$, for all $f \in C(X)$. It follows that $\sigma^{-k} x = x$, i.e.,\ $x$ belongs to $\Fix_k(\sigma)$. Since $\Fix_k (\sigma)$ is closed, $\supp(a_k)\subset \Fix_k(\sigma)$. Conversely, if $\supp(a_k) \subset \Fix_k (\sigma)$ for all $k\in\mathbb Z$, then $f(x)a_k(x)=f(\sigma^{-k}x)a_k(x)$, for all $k\in\mathbb Z$, $f\in C(X)$, and $x\in X$. This establishes the description of $C(X)^\prime$.

As to the remaining part of the statement, by Lemma~\ref{lem:topological}, $\Sigma$ is topologically free if and only if for every non-zero integer $k$ the set $\Fix_k(\sigma)$ has empty interior. So, when the system is topologically free, we see from the above description of $C(X)^\prime$ that an element $a$ of $C(X)^\prime$ necessarily belongs to $C(X)$. If $\Sigma$ is not topologically free, however, $\Fix_k (\sigma)$ has non-empty interior, for some non-zero $k$, and hence there is a non-zero function $f \in C(X)$, such that $\supp(f) \subset \Fix_k (\sigma)$. Then $f \delta^k \in C(X)^\prime$ by the above, but $f\delta^k\notin C(X)$.
\end{proof}

The following result is an adaptation of \cite[Proposition 2.1]{SSdJ_IJM} to our set-up. In spite of its elementary proof, it may come as a surprise that $C(X)^\prime$ is a maximal abelian subalgebra.

\begin{proposition}\label{prop:commutant_abelian}
The commutant $C(X)^\prime$ of $C(X)$ is abelian. In fact, it is the largest abelian subalgebra of $\ell^1 (\Sigma)$ containing $C(X)$, and it is a commutative unital Banach $^*$-subalgebra of $\ell^1(\Sigma)$.
\end{proposition}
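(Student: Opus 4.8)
The plan is to lean entirely on the explicit description of $C(X)^\prime$ furnished by Proposition~\ref{prop:commutant_description}, reducing everything to a computation with the monomials $a_k\delta^k$. The engine of the whole argument is a single observation: \emph{the twist in the convolution product collapses on the relevant supports.} Indeed, if $\supp(a_k)\subset\Fix_k(\sigma)$, then for every $x$ with $a_k(x)\neq 0$ one has $\sigma^k(x)=x$, hence $\sigma^{-k}(x)=x$, so that $b\circ\sigma^{-k}$ and $b$ agree wherever $a_k$ is non-zero, for every $b\in C(X)$. In other words $a_k\cdot(b\circ\sigma^{-k})=a_k\cdot b$ pointwise.

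First I would establish commutativity. Take $a=\sum_k a_k\delta^k$ and $b=\sum_m b_m\delta^m$ in $C(X)^\prime$, so that $\supp(a_k)\subset\Fix_k(\sigma)$ and $\supp(b_m)\subset\Fix_m(\sigma)$ for all indices. Using the relation $\delta^k f=(f\circ\sigma^{-k})\delta^k$ one computes $(a_k\delta^k)(b_m\delta^m)=a_k\cdot(b_m\circ\sigma^{-k})\,\delta^{k+m}$, which by the support observation applied to $a_k$ equals $a_k b_m\,\delta^{k+m}$. Symmetrically, applying the observation to $b_m$, one finds $(b_m\delta^m)(a_k\delta^k)=b_m\cdot(a_k\circ\sigma^{-m})\,\delta^{k+m}=b_m a_k\,\delta^{k+m}$, which equals $a_k b_m\,\delta^{k+m}$ by commutativity of $C(X)$. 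Thus the monomials commute term by term; summing over $k$ and $m$, and invoking absolute convergence of the expansions in $\ell^1(\Sigma)$ to justify the rearrangement of the resulting double series, yields $ab=ba$.

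For the maximality statement I would argue abstractly: any abelian subalgebra $B$ of $\ell^1(\Sigma)$ containing $C(X)$ has each of its elements commuting with all of $C(X)$, whence $B\subset C(X)^\prime$ by definition of the commutant; combined with the commutativity just proved and the evident inclusion $C(X)\subset C(X)^\prime$, this shows that $C(X)^\prime$ is itself an abelian subalgebra containing $C(X)$ which contains every other such subalgebra, so it is the largest one and in particular maximal abelian. The final clause, that $C(X)^\prime$ is a unital Banach $^*$-subalgebra, was already recorded at the start of the section and is immediate: it is norm-closed (as a commutant, or directly because the support conditions in Proposition~\ref{prop:commutant_description} pass to $\ell^1$-limits), it contains the identity $\delta^0\in C(X)$, and it is $^*$-closed, since $af=fa$ for all $f\in C(X)$ implies $\overline f\,a^*=a^*\overline f$ for all $f$, so $a^*\in C(X)^\prime$ again. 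I anticipate no genuine obstacle: the proof is elementary, and the only point deserving care is the support-collapse of the twist, which must be stated cleanly, together with the modest bookkeeping of passing from term-by-term commutativity of monomials to commutativity of the absolutely convergent series.
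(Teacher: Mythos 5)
Your proposal is correct and follows essentially the same route as the paper: both rest on the single observation that the twist $\alpha^k(b)=b\circ\sigma^{-k}$ collapses on $\supp(a_k)\subset\Fix_k(\sigma)$, so that $a_k\cdot\alpha^k(b_{n-k})=a_k\cdot b_{n-k}$, after which commutativity follows from commutativity of $C(X)$; the paper computes the coefficients $(ab)_n$ directly while you commute monomials and re-sum, which is the same calculation in different packaging. The maximality and Banach $^*$-subalgebra clauses are handled exactly as the paper intends when it says ``the rest is then clear.''
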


\begin{proof}
We need only prove the first statement, since the rest is then clear. Suppose $a, b \in C(X)^\prime$. By definition of the multiplication in $\ell^1 (\Sigma)$ we have, for $n\in\mathbb Z$, $(ab)_n = \sum_{k\in\mathbb Z} a_k \cdot \alpha^k(b_{n-k})$. As $a \in C(X)^\prime$, it follows from Proposition~\ref{prop:commutant_description} that $a_k\cdot \alpha^{k}(b_{n-k})=a_k \cdot b_{n-k}$, for all $k\in\mathbb Z$. Hence $(ab)_n = \sum_{k\in\mathbb Z} a_k \cdot b_{n-k}$. Similarly, $(ba)_n = \sum_{k\in\mathbb Z} b_k \cdot a_{n-k}$. Thus $(ab)_n = (ba)_n$ for all $n\in\mathbb Z$, and hence $ab = ba$.
\end{proof}

We will now proceed towards the main result of this section, Theorem~\ref{thm:commutant_intersection_property}, benefiting from our main technical preparatory result, Corollary~\ref{cor:unimodular_functions}. We start with a relatively easy case of an algebraic nature on principal ideals generated by certain elements. We emphasize that the ideals under consideration in the following result and its corollary are not assumed to be closed.

\begin{proposition}
Let $a=\sum_{k \in \mathbb{Z}} a_k \delta^k\in\ell^1(\Sigma)$, and suppose that $a_{k_0}(x_0)\neq 0$, for some $x_0\in X$ and $k_0\in\mathbb Z$, and that $x_0\subset\Per_{n_0}(\sigma)^\circ$, for some $n_0\geq 1$. Then the principal ideal generated by $a$ in $\ell^1(\Sigma)$ has non-zero intersection with $C(X)^\prime$. In fact, there exist an open neighbourhood $U$ of $x_0$, contained in $\Per_{n_0}(\sigma)^\circ$, and unimodular functions $\theta_1,\ldots,\theta_{2^{n_0-1}}\in C(X)$, which are equal to 1 on $U$, and a function $f\in C(X)$ with $0\leq f\leq 1$, $f(x_0)=1$, and support contained in $U$, such that $\frac{1}{2^{n_0-1}}\sum_{l=1}^{2^{n_0-1}} f\theta_l a\delta^{-k_0} \overline \theta_l\in C(X)^\prime$, and $E(\frac{1}{2^{n_0-1}}\sum_{l=1}^{2^{n_0-1}} f\theta_l a\delta^{-k_0} \overline \theta_l)\neq 0$.
\end{proposition}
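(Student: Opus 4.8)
The plan is to reduce the statement almost entirely to Corollary~\ref{cor:unimodular_functions}(2), using a preliminary translation by $\delta^{-k_0}$ to move the non-vanishing coefficient into degree zero. First I would replace $a$ by $b=a\delta^{-k_0}$ and compute the coefficients of $b$. Since $(a\delta^{-k_0})=\sum_k a_k\delta^{k-k_0}$, the degree-zero coefficient of $b$ is $b_0=a_{k_0}$, so $b_0(x_0)=a_{k_0}(x_0)\neq 0$. This is exactly the hypothesis ``$a_0(x_0)\neq 0$'' required to invoke Corollary~\ref{cor:unimodular_functions}. The remaining hypothesis, $x_0\in\Per_{n_0}(\sigma)^\circ$ with $n_0\geq 1$, is given; I will need to treat $n_0=1$ separately since Corollary~\ref{cor:unimodular_functions}(2) is stated for $n_0\geq 2$, but in that case $\Per_1(\sigma)^\circ=\Fix_1(\sigma)^\circ$, every $\sigma^j x_0=x_0$, and one simply takes $f$ supported near $x_0$ with no $\theta_l$ needed (the empty product $2^{n_0-1}=2^0=1$), so that $fb$ already lies in $C(X)^\prime$.

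For $n_0\geq 2$, apply Corollary~\ref{cor:unimodular_functions}(2) to the element $b$. This produces an open neighbourhood $U\subset\Per_{n_0}(\sigma)^\circ$ of $x_0$, unimodular functions $\theta_1,\ldots,\theta_{2^{n_0-1}}$ equal to $1$ on $U$, and $f\in C(X)$ with $0\leq f\leq 1$, $f(x_0)=1$, $\supp f\subset U$, such that if $c=\frac{1}{2^{n_0-1}}\sum_{l=1}^{2^{n_0-1}}f\theta_l b\overline\theta_l=\frac{1}{2^{n_0-1}}\sum_{l=1}^{2^{n_0-1}}f\theta_l a\delta^{-k_0}\overline\theta_l$ has coefficients $c=\sum_k a'_k\delta^k$, then $a'_0=fa_{k_0}\neq 0$, while $a'_{l+jn_0}=0$ for $l=1,\ldots,n_0-1$ and all $j$, and each $a'_{jn_0}$ is supported in $U\subset\Fix_{jn_0}(\sigma)$.

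It remains to verify the two assertions $c\in C(X)^\prime$ and $E(c)\neq 0$. The second is immediate: $E(c)=a'_0=fa_{k_0}$, which is non-zero since $f(x_0)=1$ and $a_{k_0}(x_0)\neq 0$. For the membership in $C(X)^\prime$, I would appeal to the concrete description in Proposition~\ref{prop:commutant_description}: I must check that $\supp(a'_k)\subset\Fix_k(\sigma)$ for every $k$. The coefficients of $c$ split into two cases according to whether $k$ is a multiple of $n_0$. If $k=l+jn_0$ with $1\leq l\leq n_0-1$, then $a'_k=0$ by property (b), and the support condition holds vacuously. If $k=jn_0$, then $a'_k$ is supported in $U\subset\Per_{n_0}(\sigma)^\circ\subset\Fix_{n_0}(\sigma)\subset\Fix_{jn_0}(\sigma)$ by property (c), which is precisely $\supp(a'_k)\subset\Fix_k(\sigma)$. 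Every integer falls into exactly one of these two cases, so the criterion of Proposition~\ref{prop:commutant_description} is met and $c\in C(X)^\prime$.

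I do not anticipate a genuine obstacle, as the heavy lifting has been done in Corollary~\ref{cor:unimodular_functions}; the only points requiring care are the bookkeeping of the shift $a\mapsto a\delta^{-k_0}$ (ensuring the stated operator $\frac{1}{2^{n_0-1}}\sum_l f\theta_l a\delta^{-k_0}\overline\theta_l$ agrees with the conjugation of $b$) and the degenerate case $n_0=1$, where the corollary does not directly apply but the conclusion is trivial. The fact that $c\in C(X)^\prime$ then lands in the ideal generated by $a$ is clear, since $c$ is obtained from $a$ by left and right multiplication by elements of $\ell^1(\Sigma)$, so the principal ideal indeed meets $C(X)^\prime$ non-trivially.
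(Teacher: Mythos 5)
Your proposal is correct and follows essentially the same route as the paper's own proof: both reduce to the second part of Corollary~\ref{cor:unimodular_functions} applied to $a\delta^{-k_0}$ (whose degree-zero coefficient is $a_{k_0}$), verify membership in $C(X)^\prime$ via the support criterion of Proposition~\ref{prop:commutant_description}, and treat $n_0=1$ separately by taking $\theta_1=1$ and $f$ supported in $\Per_1(\sigma)^\circ$. No gaps.
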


\begin{proof}
If $n_0\geq 2$, then one applies the second part of Corollary~\ref{cor:unimodular_functions} to $a\delta^{-k_0}$ to construct an element of the given form in the principal ideal generated by $a$ which has non-trivial coefficient in dimension zero and which, by Proposition~\ref{prop:commutant_description}, is in fact in $C(X)^\prime$. If $n_0=1$, one takes $U=\Per_1(\sigma)^\circ$, and chooses $f\in C(X)$, with $0\leq f\leq 1$, $f(x_0)=1$, and support contained in $U=\Per_1(\sigma)^\circ$. With $\theta_1=1$ we have $\frac{1}{2^{1-1}}\sum_{l=1}^{2^{1-1}} f\theta_l a \delta^{-k_0}\overline\theta_l=fa\delta^{-k_0}$. Since the support of each coefficient of $fa\delta^{-k_0}$ is contained in the support of $f$, which is contained in $\Per_{1}(\sigma)^\circ$, hence in $\Fix_n(\sigma)$, for all $n$, Proposition~\ref{prop:commutant_description} yields that $fa\delta^{-k_0}$ is in $C(X)^\prime$. Clearly its coefficient in dimension zero does not vanish.
\end{proof}

\begin{corollary}\label{cor:intersection_with_algebraic_ideals}
Let $I$ be an ideal of $\ell^1(\Sigma)$ such that $I\cap C(X)^\prime=0$. If $\sum_{k \in \mathbb{Z}} a_k \delta^k\in I$, then $a_k(x)=0$, for all $k\in\mathbb Z$, and all $x\in\overline{\bigcup_{n=1}^\infty \Per_n(\sigma)^\circ}$.
\end{corollary}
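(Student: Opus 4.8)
The plan is to argue by contraposition, using the preceding Proposition as the essential engine and the hypothesis $I\cap C(X)^\prime=0$ to force the coefficients to vanish. The key preliminary observation is that the set $\bigcup_{n=1}^\infty \Per_n(\sigma)^\circ$ is open, and that the zero set of each continuous coefficient $a_k$ is closed; hence it suffices to prove that each $a_k$ vanishes on the open set $\bigcup_{n=1}^\infty \Per_n(\sigma)^\circ$ itself, since the asserted vanishing on its closure then follows immediately by continuity.

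So I would suppose, towards a contradiction, that $a=\sum_{k\in\mathbb Z}a_k\delta^k\in I$ but that $a_{k_0}(x)\neq 0$ for some $k_0\in\mathbb Z$ and some $x$ in the closure $\overline{\bigcup_{n=1}^\infty \Per_n(\sigma)^\circ}$. Since $a_{k_0}$ is continuous, the set $\{y\in X: a_{k_0}(y)\neq 0\}$ is open and contains $x$; as $x$ lies in the closure of $\bigcup_{n=1}^\infty \Per_n(\sigma)^\circ$, this open neighbourhood must meet $\bigcup_{n=1}^\infty \Per_n(\sigma)^\circ$. I would therefore pick a point $x_0\in\Per_{n_0}(\sigma)^\circ$, for some $n_0\geq 1$, with $a_{k_0}(x_0)\neq 0$.

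At this stage the hypotheses of the preceding Proposition are met verbatim (with this $x_0$, $k_0$, and $n_0$), so the principal ideal generated by $a$ in $\ell^1(\Sigma)$ has non-zero intersection with $C(X)^\prime$. But $a\in I$ and $I$ is a two-sided ideal, so the principal ideal generated by $a$ is contained in $I$; concretely, the explicit element $\frac{1}{2^{n_0-1}}\sum_{l=1}^{2^{n_0-1}} f\theta_l a\delta^{-k_0}\overline\theta_l$ produced there is a finite sum of elements $b\,a\,c$ with $b,c\in\ell^1(\Sigma)$, hence lies in $I\cap C(X)^\prime$ and is non-zero. This contradicts $I\cap C(X)^\prime=0$, which completes the argument.

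I do not anticipate a serious obstacle, since the technical work is carried out by the Proposition. The one point requiring care is the passage from the closed set $\overline{\bigcup_{n=1}^\infty \Per_n(\sigma)^\circ}$ in the statement to the open set $\bigcup_{n=1}^\infty \Per_n(\sigma)^\circ$ to which the Proposition applies: this is precisely where continuity of the coefficients and the definition of closure are used, and it is what allows the Proposition—whose hypothesis demands a point in the \emph{interior} $\Per_{n_0}(\sigma)^\circ$—to be invoked.
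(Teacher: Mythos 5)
Your argument is correct and is exactly the one the paper intends: the Corollary is stated as an immediate consequence of the preceding Proposition, obtained by applying it to any point of $\bigcup_{n=1}^\infty\Per_n(\sigma)^\circ$ where some coefficient fails to vanish (producing a non-zero element of $I\cap C(X)^\prime$, a contradiction) and then passing to the closure by continuity of the $a_k$. No difference in approach; your care about the interior versus the closure is precisely the right point to make explicit.
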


Hence, if $\overline{\bigcup_{n=1}^\infty \Per_n(\sigma)^\circ}=X$, as occurs, e.g., for rational rotations, then $C(X)^\prime$ has non-zero intersection with all non-zero ideals of $\ell^1(\Sigma)$, closed or not. On the other hand, if, e.g., the system is topologically free, Corollary~\ref{cor:intersection_with_algebraic_ideals} gives no information at all.

We will now establish a counterpart of Corollary~\ref{cor:intersection_with_algebraic_ideals} for closed ideals and aperiodic points. In its proof we will need the following result on the minimality of the $C^*$-norm for commutative $C^*$-algebras.

\begin{theorem}\label{thm:minimal_norm}
Let $X$ be a locally compact Hausdorff space. If $\Vert\,.\,\Vert$ is any norm under which $C_0(X)$ is a normed algebra, then $\Vert f\Vert_\infty\leq\Vert f\Vert$, for all $f\in C_0(X)$.
\end{theorem}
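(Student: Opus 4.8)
The plan is to reduce the statement to a bound on the point evaluations, and then to establish that bound by exploiting the functional calculus available in $C_0(X)$. First I would adjoin an identity where necessary and extend the norm to the unitization, so that one may assume $X$ compact and $1\in C(X)$ with $\|1\|\geq 1$; the desired inequality $\|f\|_\infty\leq\|f\|$ is then equivalent to the assertion that, for every $x_0\in X$, the evaluation $\varepsilon_{x_0}:f\mapsto f(x_0)$ is contractive with respect to $\|\cdot\|$. As a warm-up one checks, purely algebraically, that the non-zero multiplicative functionals on the \emph{algebra} $C(X)$ are precisely the point evaluations, and that each is bounded by $\|\cdot\|_\infty$: a function bounded away from zero is invertible in $C(X)$, so a character cannot send it to $0$, which forces the value of the character to lie in the range of $f$.

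Next I would pass to the completion $A$ of $(C(X),\|\cdot\|)$, a commutative unital Banach algebra. Every character of $A$ restricts to a character of $C(X)$, hence to a point evaluation, giving a continuous map $\Phi_A\to X$ whose (closed) image $K$ satisfies $r(f)=\sup_{x\in K}|f(x)|$ for the spectral radius $r(f)$ computed in $A$. Since $r(f)\leq\|f\|$, the theorem would follow at once if $K=X$, that is, if every point evaluation extends to a character of $A$; and this extension problem is in turn equivalent to the sought contractivity of the $\varepsilon_{x_0}$. The whole matter thus reduces to showing that no evaluation $\varepsilon_{x_0}$ is $\|\cdot\|$-unbounded.

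To prove that $\varepsilon_{x_0}$ is $\|\cdot\|$-bounded I would use that $C(X)$ is complete in the uniform norm and closed under continuous functional calculus. Choosing a function $g$ peaking at $x_0$ (so $g(x_0)=1=\|g\|_\infty$ while $|g|<1$ elsewhere; a genuine peak function when $X$ is metrizable, and a strong boundary point of the uniform algebra $C(X)$ in general), one has $\|g^n\|\leq\|g\|^n$, and for arbitrary $f$ the decomposition $f g^n=f(x_0)g^n+(f-f(x_0))g^n$ together with submultiplicativity yields $|f(x_0)|\,\|g^n\|\leq\|f\|\,\|g^n\|+\|(f-f(x_0))g^n\|$. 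For positive elements one can in addition extract arbitrary roots inside $C(X)$, which opens the door to the classical square-root trick for bounding multiplicative functionals.

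The main obstacle is precisely the transfer of information from the uniform norm to the abstract algebra norm. The element $(f-f(x_0))g^n$ tends to $0$ in $\|\cdot\|_\infty$ because $g^n$ concentrates at $x_0$, but $\|\cdot\|$ may be far larger than $\|\cdot\|_\infty$ -- discontinuous homomorphisms of $C(X)$ into Banach algebras show that no estimate $\|\cdot\|\leq C\|\cdot\|_\infty$ need hold -- so this uniform smallness does not by itself control $\|(f-f(x_0))g^n\|/\|g^n\|$. Surmounting this is exactly what separates $C(X)$ from an algebra such as $\mathbb{C}[z]$, on which evaluation at an exterior point is $\|\cdot\|$-unbounded for a suitable algebra norm. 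A further delicacy is that $\|\cdot\|$ is not assumed to be a $^*$-norm, so $\|\overline f\|$ is uncontrolled and the positivity manipulations (passing through $f\overline f$) must be arranged so as not to rely on it. The crux of a complete proof is therefore to show, using the completeness of $C(X)$ in the uniform norm, that the evaluation functionals cannot be unbounded for the given algebra norm.
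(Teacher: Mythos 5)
Your proposal correctly reduces the theorem to the assertion that every point evaluation $\varepsilon_{x_0}$ is bounded for the given algebra norm $\Vert\cdot\Vert$ (equivalently, that it extends to a character of the completion, where contractivity is automatic), and it correctly locates the entire difficulty there. But it then stops at exactly that point: your final paragraph explicitly defers ``the crux of a complete proof,'' and the one concrete device you offer --- the decomposition $fg^n=f(x_0)g^n+(f-f(x_0))g^n$ for a peak function $g$ at $x_0$ --- cannot be closed, as you yourself observe, because nothing controls $\Vert(f-f(x_0))g^n\Vert/\Vert g^n\Vert$ when $\Vert\cdot\Vert$ is not assumed dominated by $\Vert\cdot\Vert_\infty$. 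Since the boundedness of the evaluations \emph{is} the content of the theorem (the unitization step and the Gelfand-theoretic bookkeeping around it are routine), what you have written is an accurate diagnosis of the problem rather than a proof of it; this is a genuine gap, not a fixable detail.

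For comparison: the paper does not prove the statement either, but handles it by citation, to Sakai's Theorem~1.2.4 and, alternatively, to the result that on a semisimple \emph{regular} commutative Banach algebra the spectral radius is dominated by every algebra norm (Kaniuth, Corollary~4.2.18). Regularity of $C_0(X)$ --- the availability of local units ($u\in C_0(X)$ equal to $1$ near $x_0$ and supported in a prescribed neighbourhood) and the local invertibility they afford in the Gelfand spectrum --- is precisely the ingredient absent from your sketch. Note also that the distinguishing property is not uniform completeness, as your closing sentence suggests: the disc algebra is uniformly closed but not regular, and uniform completeness alone does not rule out evaluation functionals that are unbounded for some algebra norm. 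If you wish to make the argument self-contained, you should either import one of the cited results or supply the regularity-based step showing that $\varepsilon_{x_0}$ remains a character of the $\Vert\cdot\Vert$-completion; as it stands, the hard half of the theorem is assumed rather than proved.
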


For the proof we refer to \cite[Theorem~1.2.4]{sakai}. Alternatively, one may use the more general result that on a semisimple regular commutative Banach algebra the spectral radius is dominated by every algebra norm \cite[Corollary 4.2.18]{kaniuth}. We note explicitly that the statement holds without any further assumption on completeness, the relation between $\Vert\,.\,\Vert$ and the canonical involution on $C(X)$, or the norm of the unit element of $C_0(X)$ in case $X$ is compact: the submultiplicativity of $\Vert\,.\,\Vert$ is all that is needed.

\begin{proposition}\label{prop:intersection_with_closed_ideals}
Let $I$ be a closed ideal of $\ell^1(\Sigma)$ such that $I\cap C(X)^\prime=0$. If $\sum_{k \in \mathbb{Z}} a_k \delta^k\in I$, then $a_k(x)=0$, for all $k\in\mathbb Z$, and all $x\in\Aper(\sigma)$.
\end{proposition}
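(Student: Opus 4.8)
The plan is to argue by contradiction, manufacturing from a putative ``bad'' element of $I$ a genuine algebra norm on $C(X)$ to which Theorem~\ref{thm:minimal_norm} applies. The first step is to convert the hypothesis $I\cap C(X)^\prime=\{0\}$ into quantitative information. Since $C(X)\subset C(X)^\prime$, we have $I\cap C(X)=\{0\}$, so the composite of the inclusion $C(X)\hookrightarrow\ell^1(\Sigma)$ with the quotient map $\pi\colon\ell^1(\Sigma)\to\ell^1(\Sigma)/I$ is injective, $I$ being closed. Pulling back the quotient norm thus yields a genuine \emph{norm} $\Vert f\Vert_I:=\inf_{j\in I}\Vert f+j\Vert$ on $C(X)$, and it is submultiplicative because $\pi$ is an algebra homomorphism into the Banach algebra $\ell^1(\Sigma)/I$. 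Theorem~\ref{thm:minimal_norm} then supplies the crucial inequality
\[
\Vert f\Vert_\infty\le\inf_{j\in I}\Vert f+j\Vert\qquad(f\in C(X)),
\]
so that every function in $C(X)$ lies at $\ell^1$-distance at least $\Vert f\Vert_\infty$ from $I$.

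Next I would set up the contradiction. Suppose some $a=\sum_k a_k\delta^k\in I$ has $a_{k_0}(x_0)\neq 0$ for an aperiodic $x_0$ and some $k_0$. Replacing $a$ by $a\delta^{-k_0}\in I$ reduces matters to $a_0(x_0)\neq 0$. Because $x_0$ is aperiodic, $\sigma^k x_0\neq x_0$ for every $k\neq 0$, so for each $N$ the first part of Corollary~\ref{cor:unimodular_functions}, applied with $\{k_1,\dots,k_{2N}\}=\{\pm1,\dots,\pm N\}$, produces an element $b^{(N)}:=\frac{1}{2^{2N}}\sum_l f\theta_l\,a\,\overline\theta_l\in I$ whose coefficients satisfy $b^{(N)}_0=f a_0$ with $b^{(N)}_0(x_0)=a_0(x_0)\neq 0$, and $b^{(N)}_k=0$ for $0<|k|\le N$. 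The essential observation is that this averaging operation is \emph{coefficientwise contractive}: by \eqref{eq:basic_relation} each stage multiplies $a_k$ pointwise by $\textup{Re}\,(g\cdot(\overline g\circ\sigma^{-k}))$, of modulus at most $1$ since $g$ is unimodular, and the final factor $f$ satisfies $0\le f\le 1$; hence $\Vert b^{(N)}_k\Vert_\infty\le\Vert a_k\Vert_\infty$ for all $k$.

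The two ingredients now combine. Writing $b^{(N)}=b^{(N)}_0+\sum_{k\neq0}b^{(N)}_k\delta^k$ and using that the off-diagonal terms vanish for $0<|k|\le N$, the choice $j=-b^{(N)}\in I$ gives
\[
\inf_{j\in I}\Vert b^{(N)}_0+j\Vert\le\Vert b^{(N)}_0-b^{(N)}\Vert=\sum_{|k|>N}\Vert b^{(N)}_k\Vert_\infty\le\sum_{|k|>N}\Vert a_k\Vert_\infty.
\]
On the other hand $\Vert b^{(N)}_0\Vert_\infty\ge|b^{(N)}_0(x_0)|=|a_0(x_0)|$. Since $a\in\ell^1(\Sigma)$, the right-hand tail tends to $0$, so choosing a single $N$ with $\sum_{|k|>N}\Vert a_k\Vert_\infty<|a_0(x_0)|$ contradicts the minimal-norm inequality applied to $f=b^{(N)}_0$. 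This forces $a_0(x_0)=0$, and undoing the reduction proves the proposition.

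I expect the main obstacle to be conceptual rather than computational: recognising that the correct object on which to deploy Theorem~\ref{thm:minimal_norm} is the quotient norm that $I$ induces on $C(X)$, and that the role of Corollary~\ref{cor:unimodular_functions} is precisely to manufacture, for each $N$, an element of $I$ that is $\ell^1$-close to a function in $C(X)$ with a prescribed nonzero value at $x_0$. The coefficientwise contractivity needed to bound the surviving $\ell^1$-tail is the only genuinely technical point, but it is immediate from \eqref{eq:basic_relation}.
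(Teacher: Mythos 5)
Your proposal is correct and follows essentially the same route as the paper: both rest on the quotient norm that $I$ induces on $C(X)$ dominating $\Vert\cdot\Vert_\infty$ via Theorem~\ref{thm:minimal_norm}, combined with Corollary~\ref{cor:unimodular_functions} to produce an element of $I$ that is $\ell^1$-close to $fa_0$. The only (immaterial) difference is bookkeeping: the paper first splits $a=a_0+b+c$ with $\Vert c\Vert<\epsilon$ and uses that conjugation by unimodular functions is contractive on $c$ as a whole, whereas you keep $a$ intact and invoke the coefficientwise contractivity from \eqref{eq:basic_relation} to bound the surviving tail $\sum_{|k|>N}\Vert b^{(N)}_k\Vert_\infty$ by $\sum_{|k|>N}\Vert a_k\Vert_\infty$ --- both estimates are valid and serve the same purpose.
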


\begin{proof}
As a preparation, let $q:\ell^1(\Sigma)\to\ell^1(\Sigma)/I$ be the quotient map. Since $I$ is closed, $\ell^1(\Sigma)/I$ is a normed algebra. We note that $I\cap C(X)\subset I\cap C(X)^\prime=\emptyset$; hence $q$ yields an embedding of $C(X)$ into $\ell^1(\Sigma)/I$. From Theorem~\ref{thm:minimal_norm} we conclude that $||q(f)||\geq ||f||_\infty$, for all $f\in C(X)$. Therefore $q$ is an isometric embedding of $C(X)$ into $\ell^1(\Sigma)/I$.

Suppose, then, that $a=\sum_{k \in \mathbb{Z}} a_k \delta^k\in I$, and that $x_0\in\Aper(\sigma)$. We will show that $a_{k_0}(x_0)=0$, for all $k_0\in\mathbb Z$. Replacing $a$ with $a\delta^{-k_0}$, we see that it is sufficient to show that $a_0(x_0)=0$. Let $\epsilon>0$. Choose $b,c\in\ell^1(\Sigma)$, and $n\geq 1$, such that $a=a_0+b+c$, with $b=\sum_{k=-n}^{-1} a_k\delta^k + \sum_{k=1}^ n a_k\delta^k$, and $||c||<\epsilon$. Since $x_0\in\Aper(\sigma)$, the points $\sigma^k x_0$, for $k=-n,\ldots,-1,1,\ldots,n$, are all different from $x_0$. Therefore, the first part of Corollary~\ref{cor:unimodular_functions} provides finitely many unimodular functions $\theta_1,\ldots,\theta_M$, and a function $f\in C(X)$, with $0\leq f\leq 1$, and $f(x_0)=1$, such that
\begin{align*}
\tilde a&=\frac{1}{M}\sum_{l=1}^M f \theta_l a \overline\theta_l\\
&=\frac{1}{M}\sum_{l=1}^M f \theta_l a_0 \overline\theta_l + \frac{1}{M}\sum_{l=1}^M f \theta_l b \overline\theta_l + \frac{1}{M}\sum_{l=1}^M f \theta_l c \overline\theta_l\\
&=fa_0 + \frac{1}{M}\sum_{l=1}^M f \theta_l c\overline\theta_l,
\end{align*}
where the term corresponding to $b$ has vanished on account of property (b) in the first part of Corollary~\ref{cor:unimodular_functions}. Write $\tilde c=\frac{1}{M}\sum_{l=1}^M f \theta_l c\overline\theta_l$ for short, so that $\tilde a=fa_0+\tilde c$. We note that $||\tilde c||\leq ||f||_\infty ||c||=||c||<\epsilon$. Since $\tilde a\in I$, $q(fa_0)=-q(\tilde c)$. Therefore, using the isometric character of $q$ on $C(X)$ in the third step, we find that
\[
|a_0(x_0)|=|f(x_0)a_0(x_0)|\leq ||fa_0||_\infty=||q(fa_0)||=||q(\tilde c)||\leq ||\tilde c||< \epsilon.
\]
Since $\epsilon>0$ was arbitrary, the proof is complete.
\end{proof}

Finally, our efforts are rewarded. The following result, on which the remainder of the paper rests, is based on all material presented so far, with the exception of the representations in Section~\ref{subsec:representations}.

\begin{theorem}\label{thm:commutant_intersection_property}
$C(X)^\prime\cap I\neq\{0\}$, for every non-zero closed ideal $I$ of $\ell^1(\Sigma)$.
\end{theorem}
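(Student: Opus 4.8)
The plan is to argue by contradiction. Suppose $I$ is a non-zero closed ideal of $\ell^1(\Sigma)$ with $C(X)^\prime\cap I=\{0\}$, and deduce that in fact $I=\{0\}$. The mechanism is to show that every coefficient $a_k$ of every element $a=\sum_{k\in\mathbb Z}a_k\delta^k\in I$ vanishes identically as a function on $X$; since the coefficient maps $a\mapsto a_k$ are faithful in the sense that $a=0$ whenever all $a_k=0$, this will force $a=0$ for all $a\in I$, hence $I=\{0\}$, the desired contradiction.

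Almost all of the work has already been carried out in the two complementary vanishing results immediately preceding. I would first observe that, since $I$ is in particular an ideal (closedness not being needed there), Corollary~\ref{cor:intersection_with_algebraic_ideals} applies under the standing assumption $I\cap C(X)^\prime=\{0\}$ and yields $a_k(x)=0$ for all $k\in\mathbb Z$ and all $x\in\overline{\bigcup_{n=1}^\infty \Per_n(\sigma)^\circ}$; in particular each $a_k$ vanishes on $\bigcup_{n=1}^\infty \Per_n(\sigma)^\circ$. I would then invoke Proposition~\ref{prop:intersection_with_closed_ideals}, which this time does use that $I$ is closed, to obtain $a_k(x)=0$ for all $k\in\mathbb Z$ and all $x\in\Aper(\sigma)$. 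Putting the two together, every coefficient $a_k$ of every $a\in I$ vanishes on the set $\Aper(\sigma)\cup\bigcup_{n=1}^\infty \Per_n(\sigma)^\circ$.

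To close the argument I would appeal to the purely topological input of Lemma~\ref{lem:topological}(3), which asserts precisely that $\Aper(\sigma)\cup\bigcup_{n=1}^\infty \Per_n(\sigma)^\circ$ is dense in $X$. Since each $a_k\in C(X)$ is continuous and vanishes on a dense subset of $X$, it vanishes identically, so $a_k=0$ for every $k\in\mathbb Z$ and hence $a=0$. As $a\in I$ was arbitrary, this gives $I=\{0\}$, contradicting that $I$ was assumed non-zero, and the theorem follows.

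The genuine obstacle in this circle of ideas does not lie in the final assembly above, which is a matter of combining two vanishing statements so as to exhaust a dense set, but rather in the analytic result Proposition~\ref{prop:intersection_with_closed_ideals} that handles the \emph{aperiodic} points. That step is where closedness is essential and where the real effort resides: it requires the unimodular-conjugation construction of Corollary~\ref{cor:unimodular_functions} to kill, up to an arbitrarily small error, all nearby off-diagonal coefficients, combined with the minimality of the supremum norm (Theorem~\ref{thm:minimal_norm}) to transfer the resulting estimate across the quotient $\ell^1(\Sigma)/I$ and so bound $|a_0(x_0)|$ by $\epsilon$. Once that input is granted, as it may be here, the present theorem is the comparatively light topological bookkeeping described, the only point to verify being that the periodic-interior and aperiodic vanishing loci together form a dense subset of $X$ — which is exactly the content of Lemma~\ref{lem:topological}(3).
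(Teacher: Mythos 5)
Your proposal is correct and follows exactly the paper's own argument: combine Corollary~\ref{cor:intersection_with_algebraic_ideals} (vanishing on $\bigcup_{n\geq 1}\Per_n(\sigma)^\circ$) with Proposition~\ref{prop:intersection_with_closed_ideals} (vanishing on $\Aper(\sigma)$), and conclude via the density statement of Lemma~\ref{lem:topological}(3) and continuity of the coefficients. Your closing assessment of where the real work lies is also accurate.
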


\begin{proof}
Suppose $I$ is a closed ideal of $\ell^1(\Sigma)$, such that $I\cap C(X)^\prime=0$. If $a=\sum_{k \in \mathbb{Z}} a_k \delta^k\in I$, then Corollary~\ref{cor:intersection_with_algebraic_ideals} shows that the $a_k$ all vanish on $\bigcup_{n=1}^\infty \Per_n(\sigma)^\circ$. Since $I$ is additionally assumed to be closed, Proposition~\ref{prop:intersection_with_closed_ideals} also applies, showing that the $a_k$ all vanish on $\Aper(\sigma)$. Since $\Aper(\sigma)\cup \bigcup_{n=1}^\infty \Per_n(\sigma)^\circ$ is dense by the third part of Lemma~\ref{lem:topological}, all $a_k$ are identically zero. Hence $I$ is the zero ideal, as was to be proved.
\end{proof}

\begin{remark} It is also true that the commutant of $C(X)$ in $c_{00}(\Sigma)$ has non-zero intersection with each non-zero ideal. A result implying this was first obtained as \cite[Theorem~6.1]{SSdJ_Lund}, and later an even shorter proof of a more general statement was found, cf.\ \cite[Theorem~3.1]{SSdJ_Banff}. Needless to say, our proof of the analogous result in a truly analytical setting is considerably more involved.

In $C^*(\Sigma)$, the commutant of $C(X)$ has non-zero intersection with each non-zero ideal, not necessarily closed or self-adjoint \cite[Corollary~4.4]{ST_JFA}. The proof of that result relies extensively on the theory of states of a $C^*$-algebra and of its Pedersen ideal. It is therefore quite different from the above approach.
\end{remark}

\section{The ideal structure of $\ell^1(\Sigma)$}\label{sec:ideal_structure}

The key result Theorem~\ref{thm:commutant_intersection_property} allows us to prove a number of theorems relating the ideal structure of $\ell^1(\Sigma)$ and the topological dynamics of $\Sigma$. The analogues of these are known to hold in the interplay between $\Sigma$ and $C^* (\Sigma)$, except, naturally, the result on the possible existence of non-self-adjoint closed ideals. With Theorem~\ref{thm:commutant_intersection_property} at our disposal, and the representations in Section~\ref{subsec:representations} available, the proofs run rather smoothly. We are consecutively concerned with:
\begin{itemize}
\item Determining when $C(X)$ has non-zero intersection with each non-zero (self-adjoint) closed ideal (Theorem~\ref{thm:main_four_equivalences}), which in turn is an important ingredient for the sequel);
\item Determining when $\ell^1(\Sigma)$ is ($^*$-)simple (Theorem~\ref{thm:simple_minimality});
\item Determining when $\ell^1(\Sigma)$ has only self-adjoint closed ideals (Theorem~\ref{thm:free_self_adjoint});
\item A structure theorem for $\ell^1(\Sigma)$ when $X$ consists of one finite orbit (Theorem~\ref{thm:structure_theorem_for_one_orbit};
\item Showing that $\ell^1(\Sigma)$ is Hermitian if $X$ consists of a finite number of points (Theorem~\ref{thm:finite_hermitian});
\item Determining when $\ell^1(\Sigma)$ is ($^*$-)prime (Theorem~\ref{thm:prime_transitive}).
\end{itemize}

\begin{theorem}\label{thm:main_four_equivalences}
The following are equivalent:
\begin{enumerate}

\item $I\cap C(X) \neq 0$ for every non-zero closed ideal $I$ of $\ell^1 (\Sigma)$;
\item $I\cap C(X) \neq 0$ for every non-zero self-adjoint closed ideal $I$ of $\ell^1 (\Sigma)$;
\item $C(X)$ is a maximal abelian subalgebra of $\ell^1 (\Sigma)$;
\item $\Sigma$ is topologically free.
\end{enumerate}
\end{theorem}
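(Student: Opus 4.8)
The plan is to prove the cycle of implications $(1)\Rightarrow(2)\Rightarrow(4)\Rightarrow(3)\Rightarrow(1)$, exploiting Theorem~\ref{thm:commutant_intersection_property} and Proposition~\ref{prop:commutant_description} as the main engines. The implication $(1)\Rightarrow(2)$ is immediate, since self-adjoint closed ideals form a subclass of closed ideals. The substantive equivalence is between the intersection property and topological freeness, so I expect the real content to live in the steps $(2)\Rightarrow(4)$ and $(4)\Rightarrow(3)\Rightarrow(1)$.

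For $(4)\Rightarrow(3)$, I would simply invoke Proposition~\ref{prop:commutant_description}: topological freeness is there shown to be equivalent to $C(X)^\prime=C(X)$, and $C(X)^\prime$ is a maximal abelian subalgebra by Proposition~\ref{prop:commutant_abelian}, so $C(X)$ itself is maximal abelian. For $(3)\Rightarrow(1)$, assuming $C(X)$ is maximal abelian, note that $C(X)\subset C(X)^\prime$ with $C(X)^\prime$ abelian forces $C(X)=C(X)^\prime$; then for any non-zero closed ideal $I$, Theorem~\ref{thm:commutant_intersection_property} gives $I\cap C(X)^\prime=I\cap C(X)\neq\{0\}$. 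This closes the loop through $(1)$, so the only genuinely new argument needed is $(2)\Rightarrow(4)$, equivalently its contrapositive.

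The plan for the contrapositive of $(2)\Rightarrow(4)$ is to assume $\Sigma$ is \emph{not} topologically free and produce a non-zero self-adjoint closed ideal $I$ with $I\cap C(X)=\{0\}$. By Lemma~\ref{lem:topological}(1), failure of topological freeness means $\Fix_n(\sigma)^\circ\neq\emptyset$ for some $n\geq 1$; pick a point $x_0$ in this interior and, after separating $x_0$ from a point with different orbit behaviour, produce via Urysohn a non-zero $f\in C(X)$ supported in $\Fix_n(\sigma)^\circ$ with $f(x_0)=1$. The natural candidate element is then something like $f\delta^n$, or better, a self-adjoint element such as $f\delta^n+\overline{\alpha^n f}\,\delta^{-n}$, lying in $C(X)^\prime$ by Proposition~\ref{prop:commutant_description} but not in $C(X)$. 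The idea is to build a non-zero closed ideal whose intersection with $C(X)$ is trivial, using the representations $\pi_{x_0,n,z}$ from Section~\ref{subsec:representations} to detect its nontriviality while exhibiting functions $g\in C(X)$ separated from the ideal.

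The hard part will be actually constructing a closed self-adjoint ideal $I$ that genuinely avoids $C(X)$, rather than merely avoiding being contained in $C(X)$. The most robust route is to use the family $\{\pi_{x_0,n,z}:z\in\mathbb T\}$ to manufacture $I$ as an intersection of kernels: for a fixed $z_0\in\mathbb T$ one can try $I=\bigcap_{z}\ker\pi_{x_0,n,z}$ over a suitable subfamily, or take $I$ to be generated by an element like $f(\delta^n-z_0)$ and then close it. One must verify that such $I$ is self-adjoint, that it is non-zero (e.g.\ by checking it is not killed by \emph{every} representation, or by exhibiting an explicit element with nonzero image under some $\pi_{x_0,n,z}$), and, crucially, that $I\cap C(X)=\{0\}$---the latter is where one must argue that no non-zero $g\in C(X)$ can lie in $I$, presumably by finding a representation $\pi_{x_0,n,z}$ on which $I$ vanishes but $g$ does not, invoking that $g$ acts diagonally with eigenvalues $g(\sigma^j x_0)$ that cannot all be zero for $g\neq 0$. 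Carefully choosing $f$ with small enough support so that the eigenvalue structure on the orbit of $x_0$ separates $C(X)$ from the constructed ideal is the delicate point I would need to get right.
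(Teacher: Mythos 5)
Your global architecture is sound and three of the four steps are correct and essentially identical to the paper's: $(1)\Rightarrow(2)$ is trivial, $(4)\Leftrightarrow(3)$ follows from Proposition~\ref{prop:commutant_description} together with Proposition~\ref{prop:commutant_abelian}, and your $(3)\Rightarrow(1)$ (maximal abelianness forces $C(X)=C(X)^\prime$, then apply Theorem~\ref{thm:commutant_intersection_property}) is the same argument the paper runs as $(4)\Rightarrow(1)$. The problem is that the entire substance of the theorem sits in $(2)\Rightarrow(4)$, and there you have only a plan with acknowledged open ends, two of whose branches would actually fail. The candidate $f\delta^{n}$ is hopeless: the ideal it generates contains $f\delta^{n}\cdot\delta^{-n}=f$, a non-zero element of $C(X)$. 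The ``intersection of kernels'' route $I=\bigcap_{z}\ker\pi_{x_0,n,z}$ also fails, because any non-zero $g\in C(X)$ vanishing on the (finite) orbit of $x_0$ lies in every $\ker\pi_{x_0,n,z}$, so this $I$ meets $C(X)$ non-trivially whenever $X\neq\mathcal O_\sigma(x_0)$.

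The workable choice, which you mention only in passing, is the generator $f-f\delta^{n_0}=f(1-\delta^{n_0})$ with $f\neq 0$ supported in $\Fix_{n_0}(\sigma)$, and the idea you are missing is that one must play representations attached to \emph{every} point of $X$ against it, not just those at $x_0$. Concretely: for $x\in\Fix_{n_0}(\sigma)^c$ the representation $\pi_x$ kills $f$ (the orbit of $x$ stays in $\Fix_{n_0}(\sigma)^c$ by invariance, and $\supp(f)\subset\Fix_{n_0}(\sigma)$), while for $x\in\Fix_{n_0}(\sigma)$ the representation $\pi_{x,n_0,1}$ kills $1-\delta^{n_0}$ (there $\delta^{n_0}$ acts as the identity). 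Since these are continuous $^*$-representations, they all vanish on the self-adjoint closed ideal $I$ generated by $f-f\delta^{n_0}$, which is non-zero because its generator is. If now $g\in I\cap C(X)$, then $g$ is annihilated by all of these representations, and since in each of them $g$ acts on the basis vector $e_0$ as multiplication by $g(x)$, $g$ vanishes on $\Fix_{n_0}(\sigma)^c\cup\Fix_{n_0}(\sigma)=X$. Your version, which confines attention to the single point $x_0$, can only ever conclude that $g$ vanishes on $\mathcal O_\sigma(x_0)$, which is not enough; this is the concrete gap that needs closing.
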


\begin{proof}
Equivalence of (3) and (4) is an immediate consequence of Proposition~\ref{prop:commutant_description}, together with Proposition~\ref{prop:commutant_abelian}. As to the remaining implications, (1) evidently implies (2).

To show that (2) implies (4), we use the same technique as in the proof of \cite[Theorem 5.4]{tomiyama_seoul_notes}, based on representations.
Suppose that $\Sigma$ is not topologically free. Then, by the first part of Lemma~\ref{lem:topological}, there exists $n_0\geq 1$ such that $\Fix_{n_0} (\sigma)$ has non-empty interior. Let $f \in C(X)$ be non-zero and such that $\supp(f) \subset \Fix_{n_0}(\sigma)$, and consider the non-zero self-adjoint closed ideal $I$ of $\ell^1 (\Sigma)$, generated by $f - f \delta^{n_0}$. We claim that, for all $x\in\Fix_{n_0}(\sigma)^c$, the representation $\pi_x$ from Section~\ref{subsec:representations} vanishes on $I$, and that the same holds for the representation $\pi_{x,n_0,1}$, for all $x\in\Fix_n(\sigma)$. Assuming this for the moment, suppose that $g\in I\cap C(X)$. Then these representations all vanish on $g$. Since in all these representations the action of $g$ on the basis vector $e_0$ in the pertinent Hilbert space is multiplication with $g(x)$, we conclude that $g$ vanishes on $\Fix_{n_0}(\sigma)^c\cup\Fix_{n_0}(\sigma)=X$. Thus $I\cap C(X)=0$, and hence (2) implies (4).
Thus it remains to prove our claim. For this, it is sufficient to prove that these continuous $^*$-representations all vanish on $f-f\delta^{n_0}$. To start with, we note that, for all $x\in\Fix_{n_0}(\sigma)^c$, $\pi_x(f)=0$. Indeed, $\pi_x(f)$ is a diagonal operator with entries $f(\sigma^j x)$, for $j\in\mathbb Z$, but, if $x\in\Fix_{n_0}(\sigma)^c$, then $\sigma^j x\in\Fix_{n_0}(\sigma)^c$, for all $j\in\mathbb Z$, whereas $\supp(f) \subset \Fix_{n_0}(\sigma)$. Hence all these entries are zero. To see that the representations  $\pi_{x,n_0,1}$ also vanish on $f-f\delta^{n_0}$, for all $x\in\Fix_{n_0}(\sigma)$, we need only note that in that case $\pi_{x,n_0,1}(\delta^{n_0})$ is the identity operator. This established our claim.

To see that (4) implies (1), note that, by Proposition~\ref{prop:commutant_description}, (4) implies that $C(X) = C(X)^\prime$, and thus (1) follows from Theorem~\ref{thm:commutant_intersection_property}.
\end{proof}

The $C^*(\Sigma)$-analogue of the previous result is \cite[Theorem 5.4]{tomiyama_seoul_notes}; the corresponding result for $c_{00}(\Sigma)$ follows from \cite[Theorem~4.5]{SSdJ_Banff}, or \cite[Corollary~3.5]{SSdJ_Lund}.

\begin{theorem}\label{thm:simple_minimality} The following are equivalent:
\begin{enumerate}
\item The only closed ideals of $\ell^1(\Sigma)$ are $\{0\}$ and $\ell^1(\Sigma)$;
\item The only self-adjoint closed ideals of $\ell^1(\Sigma)$ are $\{0\}$ and $\ell^1(\Sigma)$;
\item $X$ has an infinite number of points, and $\Sigma$ is minimal.
\end{enumerate}
\end{theorem}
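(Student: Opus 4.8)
The plan is to establish the cycle $(1)\Rightarrow(2)\Rightarrow(3)\Rightarrow(1)$. The implication $(1)\Rightarrow(2)$ is immediate, a self-adjoint closed ideal being in particular a closed ideal.

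For $(3)\Rightarrow(1)$ I would first note that an infinite minimal system is automatically free, and hence topologically free: a periodic point has a finite, therefore closed, orbit, which can be dense in $X$ only when $X$ is finite. By Proposition~\ref{prop:commutant_description} (equivalently Theorem~\ref{thm:main_four_equivalences}) topological freeness gives $C(X)^\prime = C(X)$. Now let $I$ be a non-zero closed ideal. Theorem~\ref{thm:commutant_intersection_property} yields $I\cap C(X)^\prime\neq\{0\}$, whence $I\cap C(X)\neq\{0\}$. Writing $J=I\cap C(X)$, this is a non-zero ideal of $C(X)$; since $\delta f\delta^{-1}=\alpha(f)$ for $f\in C(X)$, $J$ is invariant under $\alpha^{\pm1}$, so its common zero set $Z\subset X$ is closed, $\sigma$-invariant, and proper. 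Minimality forces $Z=\emptyset$. By compactness I may then choose $f_1,\dots,f_m\in J$ with no common zero and set $g=\sum_{i=1}^m \overline{f_i}\,f_i$; this lies in $J$ because each $\overline{f_i}\in C(X)\subset\ell^1(\Sigma)$ and $f_i\in I$, and it is strictly positive, hence invertible in $C(X)$. Therefore $I$ contains an invertible element and $I=\ell^1(\Sigma)$.

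For $(2)\Rightarrow(3)$ I would argue by contraposition, producing a non-trivial self-adjoint closed ideal whenever $X$ is finite or $\Sigma$ fails to be minimal. If $X$ is finite it is discrete and every point $x_0$ is periodic, say $x_0\in\Fix_n(\sigma)$; for $z\in\mathbb T$ the representation $\pi_{x_0,n,z}$ of Section~\ref{subsec:representations} is a unital contractive $^*$-representation into $B(H_n)$, so $\ker\pi_{x_0,n,z}$ is a proper self-adjoint closed ideal, non-zero because $\pi_{x_0,n,z}(\delta^n - z\chi_{\{0\}})=z\,\id-z\,\id=0$ while $\delta^n-z\chi_{\{0\}}\neq 0$. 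If $\Sigma$ is not minimal, choose a proper non-empty closed $\sigma$-invariant subset $Y$; the coefficientwise restriction $\sum_k a_k\delta^k\mapsto\sum_k (a_k|_Y)\delta^k$ is a $^*$-homomorphism into the analogous algebra of the restricted system $(Y,\sigma|_Y)$ (well defined as $Y$ is invariant), and its kernel $\{a=\sum_k a_k\delta^k : a_k|_Y=0\text{ for all }k\}$ is a self-adjoint closed ideal, non-zero since $Y\neq X$ and proper since $Y\neq\emptyset$. In either case $(2)$ fails.

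Given Theorem~\ref{thm:commutant_intersection_property}, each step is fairly direct, and I expect the only delicate point to be the final passage in $(3)\Rightarrow(1)$ from an empty common zero set to an invertible element: here I must stay inside the possibly non-self-adjoint ideal $I$, which is why I form $\overline{f_i}f_i$ (legitimate because $\overline{f_i}\in C(X)\subset\ell^1(\Sigma)$ and $f_i\in I$) rather than appealing to self-adjointness of $I$, and I must carefully verify that $J=I\cap C(X)$, and hence $Z$, is genuinely $\sigma$-invariant before invoking minimality.
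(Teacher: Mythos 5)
Your proof is correct and follows essentially the same route as the paper: the cycle $(1)\Rightarrow(2)\Rightarrow(3)\Rightarrow(1)$, using the vanishing ideal on an orbit closure and the kernel of a representation $\pi_{x_0,n,z}$ for $(2)\Rightarrow(3)$, and Theorem~\ref{thm:commutant_intersection_property} together with minimality for $(3)\Rightarrow(1)$. The only (harmless) variations are that you exhibit $\delta^n-z\chi_{\{0\}}$ as an explicit non-zero kernel element where the paper argues by dimension, and that you produce an invertible element of $I\cap C(X)$ by a compactness argument where the paper invokes the classification of closed invariant ideals of $C(X)$.
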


\begin{proof}
Evidently (1) implies (2). Assuming (2), we will show that (3) holds, and we start by showing that $\Sigma$ must be minimal.  Indeed, if not, then there is a point $x_0 \in X$ such that $\overline{\mathcal{O}_{\sigma} (x_0)} \neq X$. Note that $\overline{\mathcal{O}_{\sigma} (x_0)}$ is invariant under $\sigma$ and its inverse. Define
\[
I = \left\{a \in \ell^1 (\Sigma) : a_k(x)=0 \textup{ for all }k\in\mathbb Z\textup{ and all }x\in\overline{{\mathcal O}_\sigma(x_0)}\right\}.
\]
It is easy to see that $I$ is a proper non-zero self-adjoint closed ideal of $\ell^1 (\Sigma)$, contradicting (2). Hence $\Sigma$ is minimal. If $X$ is finite, then there exists a periodic point $x_0\in X$, of period $n_0\geq 1$. In that case, the associated finite dimensional unital continuous $^*$-representation $\pi_{x_0,n_0,1}$ from Section~\ref{subsec:representations} has as its kernel a proper self-adjoint closed ideal of $\ell^1(\Sigma)$, which cannot be the zero ideal for reasons of dimension. Again, this contradicts the assumption (2), so that $X$ must be infinite. Thus (2) implies (3).

In order to show that (3) implies (1), suppose that $X$ is infinite, and that $\Sigma$ is minimal. Let $I$ be a non-zero closed ideal. We claim that $X = \Aper(\sigma)$. Indeed, if $x_0\in\Per(\sigma)$, then the minimality of $\Sigma$ yields $X=\overline{\mathcal O_\sigma(x_0)}=\mathcal O_\sigma(x_0)$, contradicting the fact that $X$ is infinite. Hence $X=\Aper(\sigma)$ as claimed, so $\Sigma$ is certainly topologically free. From Theorem~\ref{thm:main_four_equivalences} it then follows that $I \cap C(X)\neq \{0\}$. It is not difficult to see that $I \cap C(X)$ is a closed ideal of $C(X)$ that is invariant under $\alpha$ and its inverse. Hence there exists a closed subset $S$ of $X$, invariant under $\sigma$ and its inverse, such that $I\cap C(X)=\{f\in C(X) : f(x)=0\textup{ for all }x\in S\}$. Since $I\cap C(X)\neq \{0\}$, we conclude that $S\neq X$, so that the minimality of $\Sigma$ implies that $S=\emptyset$. Therefore $I\cap C(X)=C(X)$, and this implies that $I=\ell^1(\Sigma)$, as was to be proved.
\end{proof}

The previous result is analogous to \cite[Theorem 5.3]{tomiyama_seoul_notes}, \cite[Theorem VIII 3.9]{davidson} and the main result in \cite{power}. The corresponding result for $c_{00}(\Sigma)$ follows from \cite[Theorem~5.1]{SSdJ_Lund}.

The greater complexity of the algebras $\ell^1(\Sigma)$, as compared to that of its $C^*$-envelope $C^*(\Sigma)$, becomes evident from the possible existence of closed ideals which are not self-adjoint. In fact, we can describe precisely when such ideals exist. In doing so, we will make use of the following:

\begin{theorem}\label{thm:rudin_non_self_adjoint_ideals}
The convolution algebra $\ell^1(\mathbb Z)$ has a non-self-adjoint closed ideal.
\end{theorem}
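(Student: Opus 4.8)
The plan is to exhibit an explicit non-self-adjoint closed ideal of $\ell^1(\mathbb Z)$ by working on the Fourier transform side. Recall that the Fourier transform identifies $\ell^1(\mathbb Z)$ with the Wiener algebra $A(\mathbb T)$ of absolutely convergent Fourier series on the circle, and under this identification the involution $a\mapsto a^*$ on $\ell^1(\mathbb Z)$ corresponds to $F(z)\mapsto\overline{F(\overline z)}=\overline{F(1/z)}$ on $A(\mathbb T)$ (since $\overline{a(-n)}$ are the Fourier coefficients of $\overline{F(1/z)}$). Thus a closed ideal $J\subset A(\mathbb T)$ fails to be self-adjoint precisely when it is not invariant under this conjugation-type symmetry. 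The strategy is therefore to produce a closed ideal whose hull (common zero set) or local vanishing behaviour at some point is not symmetric under $z\mapsto\overline z$.

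The cleanest route I would take is to localise at a single point of $\mathbb T$. Fix a point $z_0\in\mathbb T$, say $z_0=1$, and recall the classical fact (Wiener--Ditkin / the failure of spectral synthesis notwithstanding, this particular local statement is elementary) that one can consider ideals defined by conditions on the value and the derivative at $z_0$. Concretely, I would let
\[
J=\{F\in A(\mathbb T): F(1)=0,\ F^\prime(1)=0\},
\]
interpreted appropriately: the subtlety is that not every $F\in A(\mathbb T)$ is differentiable, so one must use the correct local ideal. The honest formulation is to take the closed ideal $J$ consisting of those $F\in A(\mathbb T)$ that vanish to ``second order'' at $z_0$ in the sense dictated by the local theory, and then compare it with its image $J^*$ under conjugation. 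Because the derivative condition $F^\prime(1)=0$ is not preserved by $F\mapsto\overline{F(1/z)}$ unless one also conjugates — and the conjugation sends a one-sided tangential vanishing condition to the opposite one — the ideal $J$ can be arranged to differ from $J^*$, giving non-self-adjointness.

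The key steps, in order, are: (i) transfer the problem to $A(\mathbb T)$ and identify the involution explicitly as $F\mapsto\overline{F(1/z)}$; (ii) pin down a concrete closed ideal $J$ defined by a local condition at a single point $z_0$ that involves the first-order (tangential) behaviour, for which the relevant element-level computations are available in the classical literature on $A(\mathbb T)$; (iii) exhibit a single element $F\in J$ whose conjugate $\overline{F(1/z)}$ does \emph{not} lie in $J$, thereby witnessing $J\neq J^*$ and hence that $J$ is not self-adjoint; and (iv) confirm that $J$ is genuinely a proper non-zero closed ideal. Since the statement only asserts \emph{existence}, a single explicit pair $(J,F)$ suffices, and I would keep the construction as concrete as possible rather than invoking abstract synthesis results.

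The main obstacle is step (ii): ideals of $A(\mathbb T)$ are delicate because spectral synthesis fails, so one cannot freely assume that a closed ideal is determined by its hull together with finitely many derivative conditions. I would therefore sidestep the full ideal theory and instead use the most elementary available construction — a point mass and its derivative generate, via annihilators, a distribution supported at $z_0$, and the closed ideal it annihilates is controlled locally. The careful point is to verify that the conjugation-asymmetric condition I impose actually produces distinct ideals $J$ and $J^*$; this requires exhibiting the witnessing element $F$ explicitly and checking by a direct Fourier-coefficient computation that $F\in J$ while $\overline{F(1/z)}\notin J$. This is the one place where a short but genuine calculation, rather than a soft argument, is needed.
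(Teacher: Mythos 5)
Your approach cannot work, and the obstruction is exactly the spectral--synthesis issue that you mention and then try to sidestep. First, a correction that matters: under $a\mapsto F(z)=\sum_n a(n)z^n$, the involution $a^*(n)=\overline{a(-n)}$ of $\ell^1(\mathbb Z)$ corresponds to \emph{pointwise} conjugation $F\mapsto\overline F$ on $\mathbb T$ (the $m$-th Fourier coefficient of $\overline F$ is $\overline{a(-m)}$), not to $F\mapsto\overline{F(1/z)}$, whose coefficients are $\overline{a(m)}$. Consequently any ideal cut out by pointwise vanishing conditions, or by vanishing of (tangential) derivatives at points, is automatically invariant under the involution: if $F(z_0)=0$ and $F'(z_0)=0$, the same holds for $\overline F$. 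So your proposed $J$, whatever its precise formulation and closure, is self-adjoint and cannot witness the theorem. The hoped-for asymmetry in step (iii) is not there.

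Second, localising at a single point is doomed for an independent reason. One-point subsets of $\mathbb T$ are Ditkin sets, hence sets of spectral synthesis for the Wiener algebra, so the \emph{only} closed ideal with hull $\{z_0\}$ is $k(\{z_0\})=\{F:F(z_0)=0\}$. The functional $F\mapsto F'(z_0)$ is not bounded on $A(\mathbb T)$: the only pseudomeasures supported at a point are the scalar multiples of $\delta_{z_0}$, since $\delta_{z_0}'$ has Fourier coefficients of order $n$. Hence $\{F:F(z_0)=F'(z_0)=0\}$ is not closed, and its closure is all of $k(\{z_0\})$, which is self-adjoint. Since every ideal of the form $k(E)$ (and $j(E)$) is conjugation-invariant, a non-self-adjoint closed ideal must have a hull that \emph{fails} spectral synthesis, and for $\mathbb T=\widehat{\mathbb Z}$ the existence of such sets is already Malliavin's theorem. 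This is precisely why the paper gives no construction: it invokes the genuinely deep result \cite[Theorem 7.7.1]{rudin_fourier_analysis} that $L^1(G)$ has a non-self-adjoint closed ideal for every non-compact locally compact abelian group $G$. An elementary explicit example of the kind you are aiming for does not exist.
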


In fact, for every non-compact locally compact abelian group $G$, there exists a closed ideal of $L_1(G)$ which is not self-adjoint. For this rather deep result we refer to \cite[Theorem 7.7.1]{rudin_fourier_analysis}.

Now we can establish the following.

\begin{theorem}\label{thm:free_self_adjoint} The following are equivalent:
\begin{enumerate}
\item Every closed ideal of $\ell^1 (\Sigma)$ is self-adjoint;
\item $\Sigma$ is free.
\end{enumerate}
\end{theorem}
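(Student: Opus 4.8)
For the implication (1) $\Rightarrow$ (2), I would argue by contraposition: suppose $\Sigma$ is not free. Then there is a periodic point, so $\Fix_{n_0}(\sigma)^\circ \neq \emptyset$ for some $n_0 \geq 1$ (here I am using the characterization via the first part of Lemma~\ref{lem:topological}, or more directly the fact that a non-aperiodic point sits in some $\Fix_{n_0}(\sigma)$). The natural strategy is to manufacture a non-self-adjoint closed ideal by transporting Rudin's result (Theorem~\ref{thm:rudin_non_self_adjoint_ideals}) about $\ell^1(\mathbb Z)$ into $\ell^1(\Sigma)$. Concretely, I would pick a point $x_0$ of period exactly $n_0$ (or a fixed point when $n_0=1$) and use a representation of the form $\pi_{x_0,n_0,z}$, or better a $C(X)$-valued restriction argument near $x_0$, to produce a quotient or subalgebra of $\ell^1(\Sigma)$ containing an isometric copy of $\ell^1(\mathbb Z)$ as a $^*$-subalgebra on which the involution of $\ell^1(\Sigma)$ restricts to the usual one. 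A non-self-adjoint closed ideal of that copy of $\ell^1(\mathbb Z)$ should then pull back to a non-self-adjoint closed ideal of $\ell^1(\Sigma)$. The delicate point is ensuring that the pullback stays \emph{closed} and genuinely \emph{non-self-adjoint}; I expect this requires choosing a function $f$ supported in $\Fix_{n_0}(\sigma)^\circ$ and localizing so that the relevant coefficients $\delta^{jn_0}$ generate a copy of $\ell^1(\mathbb Z)$ with the correct involution $\delta^* = \delta^{-1}$.

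For the implication (2) $\Rightarrow$ (1), assume $\Sigma$ is free, so $X = \Aper(\sigma)$ and in particular $\Sigma$ is topologically free, whence $C(X) = C(X)^\prime$ by Proposition~\ref{prop:commutant_description}. Let $I$ be a closed ideal; I must show $I^* = I$, equivalently $I^* \subseteq I$. The key leverage is Theorem~\ref{thm:commutant_intersection_property} together with the fact that in the free case Proposition~\ref{prop:intersection_with_closed_ideals} applies to \emph{all} of $X$: if $I \cap C(X) = \{0\}$ then $I\cap C(X)^\prime = \{0\}$, and since every coefficient vanishes on $\Aper(\sigma) = X$, we would get $I = \{0\}$, which is self-adjoint trivially. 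So the interesting case is $I \neq \{0\}$, where $I \cap C(X) \neq \{0\}$. I would try to show that $I$ is determined by its intersection with $C(X)$ in a way that forces self-adjointness. Since $I \cap C(X)$ is a closed ideal of $C(X)$, it corresponds to a closed $\sigma$-invariant set $S$, and the candidate is that $I = \{a \in \ell^1(\Sigma) : a_k(x) = 0 \text{ for all } k \in \mathbb Z,\ x \in S\}$, an ideal which is manifestly self-adjoint by the formula $a^*(n) = \overline{\alpha^n(a(-n))}$.

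\textbf{The main obstacle} will be establishing, in the free case, that $I$ actually equals this self-adjoint ideal built from $S = \{x : f(x) = 0 \text{ for all } f \in I \cap C(X)\}$, rather than merely being contained in it. The inclusion $I \subseteq \{a : a_k|_S = 0\}$ should follow from Proposition~\ref{prop:intersection_with_closed_ideals} applied after multiplying by suitable $\delta^{-k}$ to reach each coefficient; the reverse inclusion is the crux. I expect this is where freeness is used essentially: one must show that $I$ contains enough elements, presumably by using that $I \cap C(X)$ generates, under multiplication by $\delta^k$ and approximation, all of the putative ideal. I anticipate needing a careful approximation argument exploiting that for $x \in \Aper(\sigma)$ the points $\sigma^k x$ are distinct, so that Corollary~\ref{cor:unimodular_functions} can isolate individual coefficients; combined with the density statement in Lemma~\ref{lem:topological} and the isometric embedding of $C(X)$ into the quotient, this should close the gap and yield $I^* = I$.
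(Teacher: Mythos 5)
Your argument for (1) $\Rightarrow$ (2) has a genuine gap at its very first step: from ``$\Sigma$ is not free'' you conclude that $\Fix_{n_0}(\sigma)^\circ\neq\emptyset$ for some $n_0\geq 1$. That is the negation of \emph{topological} freeness (first part of Lemma~\ref{lem:topological}), not of freeness. Non-freeness only gives you a single periodic point; the fixed-point sets may all have empty interior (e.g.\ a homeomorphism of a circle or interval with exactly one fixed point and all other points aperiodic). Such systems are exactly the ones where your localization strategy --- choosing $f$ supported in $\Fix_{n_0}(\sigma)^\circ$ so that the elements $f\delta^{jn_0}$ generate a copy of $\ell^1(\mathbb Z)$ --- has nothing to localize to, yet the theorem still asserts a non-self-adjoint closed ideal exists. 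Your fallback of using $\pi_{x_0,n_0,z}$ is the right track, but as sketched it lands you in a \emph{subalgebra} or a single finite-dimensional quotient, and neither suffices: preimages of ideals of subalgebras are not ideals of $\ell^1(\Sigma)$, and a single $\pi_{x_0,n_0,z}$ has finite-dimensional image with no non-self-adjoint closed ideals. The paper's proof assembles \emph{all} the representations $\pi_{x_0,p,z}$, $z\in\mathbb T$, into one continuous $^*$-homomorphism $\Psi:\ell^1(\Sigma)\to M_p(\AC(\mathbb T))$, proves $\Psi$ is \emph{surjective} (a Urysohn argument prescribing the values $a_k(\sigma^j x_0)$), and then pulls back $M_p(I)$ for Rudin's non-self-adjoint closed ideal $I\subset\AC(\mathbb T)\cong\ell^1(\mathbb Z)$; surjectivity is what guarantees the pullback is not self-adjoint. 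This construction needs only one periodic orbit, not an open set of periodic points, and that is the missing idea.

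Your plan for (2) $\Rightarrow$ (1) has the same architecture as the paper's: show $I$ coincides with the manifestly self-adjoint ideal $\{a: a_k\!\upharpoonright_S=0\}$, where $S$ is the hull of $I\cap C(X)$. Two remarks. First, you have the difficulty inverted: the inclusion $\{a: a_k\!\upharpoonright_S=0\}\subseteq I$ is the easy one, since $I\cap C(X)=\ker(S)$ by the classification of closed $\alpha$-invariant ideals of $C(X)$, and then each $f_k\delta^k$ lies in $I$ and the series converges in the closed ideal $I$. The genuinely hard inclusion is $I\subseteq\{a:a_k\!\upharpoonright_S=0\}$, and there Proposition~\ref{prop:intersection_with_closed_ideals} does not apply as stated, because its hypothesis $I\cap C(X)^\prime=\{0\}$ fails. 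You must either rerun its proof with the quotient norm dominating the sup norm on $C(X)/\ker(S)\cong C(S)$ (Theorem~\ref{thm:minimal_norm}), using that $S\subset\Aper(\sigma)$ by freeness so that Corollary~\ref{cor:unimodular_functions} applies at points of $S$, or do what the paper does: factor the quotient map through the restriction $^*$-homomorphism $\phi:\ell^1(\Sigma)\to\ell^1(\Sigma_\pi)$ onto the restricted system and apply Theorem~\ref{thm:main_four_equivalences} to $\Sigma_\pi$ --- noting that freeness, unlike topological freeness, is inherited by closed invariant subsets, which is precisely where the hypothesis is used. Either route closes the gap; your sketch gestures at the ingredients but does not yet supply this step.
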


\begin{proof}
We start by showing that (2) implies (1). Suppose $\Sigma$ is free and let $I \subset \ell^1 (\Sigma)$ be a closed ideal. In order to show that it is self-adjoint, we may assume that it is non-zero and proper. Now since $\Sigma$ is, in particular, topologically free, Theorem~\ref{thm:main_four_equivalences} implies that $I \cap C(X)$ is a non-zero closed ideal of $C(X)$, and it is easy to see that it is invariant under $\alpha$ and its inverse. Hence $I \cap C(X) = \{f\in C(X) : f_{\upharpoonright X_{\pi}}=0\}$, for some closed subset $X_{\pi}$ of $X$ that is invariant under $\sigma$ and its inverse. Since $I$ is proper, $X_\pi$ is not empty. Denote by $\sigma_{\pi}$ the restriction of $\sigma$ to $X_{\pi}$, and write $\Sigma_{\pi} = (X_{\pi}, \sigma_{\pi})$.

Let $\pi : \ell^1 (\Sigma) \rightarrow \ell^1 (\Sigma) / I$ be the quotient map. We will show that $\pi$ can be factored in a certain way. To this end, denote by $\phi : \ell^1 (\Sigma) \rightarrow \ell^1 (\Sigma_{\pi})$ the $^*$-homomorphism defined by $\sum_{k\in\mathbb Z} f_k \delta^k \mapsto \sum_{k\in\mathbb Z} {f_k}_{\upharpoonright X_{\pi}} \delta_{\pi}^k$. By Tietze's extension theorem every function in $C(X_{\pi})$ can be extended to a function in $C(X)$, and an easy application of Urysohn's lemma shows that one can choose an extension whose norm is arbitrarily close to the norm of the function one extends. Using this, it is not difficult to show that the map $\Psi : \ell^1 (\Sigma_{\pi}) \rightarrow \ell^1 (\Sigma) / I$, defined by $\sum_{k\in\mathbb Z} {f_k} \delta_{\pi}^k \mapsto \sum_{k\in\mathbb Z} \widetilde{f_k} \delta^k + I$, where the $\widetilde{f_k}$ are such that $\sum_{k\in\mathbb Z} \widetilde{f_k} \delta^k \in \ell^1 (\Sigma)$ and $\widetilde{f_k}_{\upharpoonright X_{\pi}} = f_k$, is a well defined contractive homomorphism. We note that $\pi = \Psi \circ \phi$. Since $\ker(\Psi)$ is a closed ideal of $\ell^1 (\Sigma_{\pi})$,  $\ker(\Psi) \cap C(X_{\pi}) = \{0\}$ by construction, and $\Sigma_{\pi}$ is free, hence topologically free, it follows from Theorem~\ref{thm:main_four_equivalences} that $\Psi$ is injective. Thus $I = \ker(\pi) = \ker(\phi)$, and the latter is self-adjoint since $\phi$ is a $^*$-homomorphism.

We will now establish that (1) implies (2). Suppose that (1) holds, but that there exists $x_0 \in\Per_p(\sigma)$, for some $p\geq 1$. We will show that $\ell^1(\Sigma)$ has a non-self-adjoint closed ideal, and this contradiction establishes that (1) implies (2). This ideal is found by constructing a continuous surjective $^*$-homomorphism $\Psi:\ell^1(\Sigma)\to A$, where $A$ is an involutive algebra, which is also a normed linear space, and which has a non-self-adjoint closed ideal $I$. In that case, $\Psi^{-1}(I)$ is a closed ideal of $\ell^1(\Sigma)$ and it cannot be self-adjoint, since the surjectivity of $\Psi$ would then imply that $I$ is self-adjoint. For $A$ we take $M_p(\AC(\mathbb{T}))$, the algebra of $p\times p$-matrices with coefficients in the Banach $^*$-algebra of continuous functions on $\mathbb T$ with absolutely convergent Fourier series. The involution $M_p(\AC(\mathbb{T}))$ is the canonical one, and as norm we take $\|A\| = \max_{1 \leq i, j \leq p} \|f_{ij}\|_{\AC(\mathbb{T})}$, for $A \in M_p(\AC(\mathbb{T}))$. Here $\Vert\,.\,\Vert_{\AC({\mathbb T})}$ is the usual norm on $\AC(\mathbb{T})$, i.e., the sum of the absolute values of the Fourier coefficients of the function involved. As a Banach $^*$-algebra, $\AC(\mathbb{T})$ is, via Fourier transform, isometrically $^*$-isomorphic with $\ell^1(\mathbb{Z})$. Therefore, Theorem~\ref{thm:rudin_non_self_adjoint_ideals} shows that $\AC (\mathbb{T})$ has a non-self-adjoint closed ideal $I$, and hence $M_p(\AC(\mathbb{T}))$ has a non-self-adjoint closed ideal, namely $M_p(I)$. Hence it remains to construct a continuous surjective $^*$-homomorphism $\Psi:\ell^1(\Sigma)\to M_p(\AC(\mathbb{T}))$.

This is accomplished by combining, for $z \in \mathbb{T}$, the $p$-dimensional representations $\pi_{x_0,p,z}$ of $\ell^1 (\Sigma)$, which are associated with $x_0\in\Fix_p(\sigma)$ as described in Section~\ref{subsec:representations}.
For $f\in C(X)$ we define
\[
\Psi(f)=\left(
\begin{array}{cccc}
f(x_0) & 0 & \ldots & 0 \\
0 & f (\sigma x_0) & \ldots & 0 \\
\vdots & \vdots & \ddots & \vdots \\
0 & 0 & \ldots & f(\sigma^{p-1} x_0)
\end{array} \right)\in M_p(\AC(\mathbb{T})),
\]
and we let
\[
\Psi(\delta)=\left(
\begin{array}{ccccc}
0 & 0 & \ldots & 0 & z \\
1 & 0 & \ldots & 0 & 0 \\
0 & 1 & \ldots & 0&0\\
\vdots & \vdots & \ddots &\vdots &\vdots \\
0 & 0 & \ldots & 1&0
\end{array} \right)\in M_p(\AC(\mathbb{T})).
\]
Then $\Psi(\delta)$ is an invertible (in fact unitary) element of $M_p(\AC(\mathbb{T}))$, so that we can define
\[
\Psi\left(\sum_{k\in\mathbb Z}a_k\delta^k\right)=\sum_{k\in\mathbb Z}\Psi(a_k)\Psi(\delta)^k,
\]
for $\sum_{k\in\mathbb Z}a_k\delta^k\in c_{00}(\Sigma)$. Since $\Vert \Psi(\delta)\Vert=\Vert\Psi (\delta)^{-1}\Vert =1$, and $\Vert\Psi(f)\Vert\leq\Vert f\Vert$, for $f\in C(X)$,  $\Psi:c_{00}(\Sigma)\to M_p(\AC(\mathbb{T}))$ is contractive and it is easy to check that it is a unital $^*$-homomorphism. Using that $\Psi(\delta)^p=z\cdot\id$, some moments thought show that one has explicitly that
\begin{align}\label{eq:AC_representation}
\Psi \left( \sum_{k\in\mathbb Z} a_k \delta^k\right) &=  \Psi\left( \sum_{r=0}^{p-1}\sum_{l\in\mathbb Z} a_{lp+r} \delta^{lp+r}\right)\notag\\
&=\sum_{r=0}^{p-1}\sum_{l\in\mathbb Z} \Psi(a_{lp+r})\cdot z^l \cdot \Psi(\delta)^r \notag\\
&=\sum_{r=0}^{p-1}\left(\sum_{l\in\mathbb Z} \Psi(a_{lp+r}) z^l\right)\Psi(\delta)^r \\
&=\left (\begin{array}{lcl}
\sum_{l\in\mathbb Z} a_{lp} (x_0) z^l &  \ldots & \sum_{l\in\mathbb Z} a_{(l-1)p+1} (x_0) z^l \notag\\
\sum_{l\in\mathbb Z} a_{lp+1} (\sigma x_0) z^l   &  \ldots & \sum_{l\in\mathbb Z} a_{(l-1)p+2} (\sigma x_0) z^l \\
\sum_{l\in\mathbb Z} a_{lp+2} (\sigma^2 x_0) z^l  &  \ldots & \sum_{l\in\mathbb Z} a_{(l-1)p+3}(\sigma^2 x_0) z^l \\
\quad\quad\quad\quad\vdots & \vdots &\quad\quad\quad\quad\vdots\\
\sum_{l\in\mathbb Z} a_{lp+(p-1)} (\sigma^{p-1} x_0) z^l  &  \ldots & \sum_{l\in\mathbb Z} a_{lp}(\sigma^{p-1} x_0) z^l\\
\end{array}
\right)\notag,
\end{align}
for $\sum_{k\in\mathbb Z}a_k\delta^k\in c_{00}(\Sigma)$. By density of $c_{00}(\Sigma)$, $\Psi$ extends to a unital contractive $^*$-homomorphism $\Psi:\ell^1(\Sigma)\to M_p(\AC(\mathbb{T}))$, which is explicitly given by \eqref{eq:AC_representation} again, for $\sum_{k\in\mathbb Z}a_k\delta^k\in\ell^1(\Sigma)$. We claim that $\Psi$ is surjective.
Indeed, since each complex number $a_i (\sigma^j x_0)$, for $i\in\mathbb Z$, and $j=0,\ldots,p-1$, occurs precisely once (somewhere in row $j+1$) as a Fourier coefficient in the matrix of $\Psi\left(\sum_{k\in\mathbb Z} a_k \delta^k\right)$ in \eqref{eq:AC_representation}, one sees that finding a pre-image $\sum_{k\in\mathbb Z} a_k \delta^k$ of a given element of $M_p(\AC(\mathbb{T}))$ amounts to prescribing the numbers $a_k (\sigma^j x_0)$, for $k\in\mathbb Z$, and $j=0,\ldots,p-1$. Since the points $x_0,\ldots,\sigma^{p-1}(x_0)$ are all different, Urysohn's lemma implies that, for $k\in\mathbb Z$, one can actually find $a_k\in C(X)$ such that, for $j=1,\ldots.p-1$, $a_k(\sigma^j)$ has the prescribed value, and, moreover, such that $\|a_k\|_\infty=\max_{j=0,\ldots,p-1}|a_k(\sigma^j x_0)|$. With these $a_k$, we have $\sum_{k\in\mathbb Z}\|a_k\|_\infty\leq\sum_{k\in\mathbb Z}\sum_{j=0}^{p-1}|a_k(\sigma^j x_0)|$. Since the double series will then converge as a consequence of our starting out with an element of $M_p(\AC(\mathbb{T}))$, the tentative pre-image $\sum_{k\in\mathbb Z} a_k\delta^k$ as constructed is thus seen to be indeed in $\ell^1(\Sigma)$. We conclude that $\Psi$ is surjective, and hence $\Psi:\ell^1(\Sigma)\to M_p(\AC(\mathbb{T}))$ is a continuous surjective $^*$-homomorphism, as desired.
\end{proof}

If $X$ consists of one finite orbit of $p$ elements, then $C^*(\Sigma)$ is isomorphic to $M_p(C(\mathbb T))$ \cite[Proposition~3.5]{tomiyama_rev_math_phys}. Since $C(\mathbb T)$ is the $C^*$-envelope of $AC(\mathbb T)$, the following structure theorem for $\ell^1(\Sigma)$ is therefore quite natural. It is a direct consequence of the second part of the above proof, since, under the hypothesis that $X$ consists of one finite orbit, the map $\Psi$ figuring in that proof is then evidently also injective. If $X$ consists of one point, the result reduces to the standard isomorphism between $\ell^1(\mathbb Z)$ and $\AC(\mathbb T)$. 

\begin{theorem}\label{thm:structure_theorem_for_one_orbit}
Suppose that $X=\{x_0,\sigma x_0,\ldots,\sigma^{p-1}x_0\}$ consists of one finite orbit of $p$ elements, for some $x_0\in X$, and $p\geq 1$. Then the map $\Psi:\ell^{1}(\Sigma)\to M_p(\AC (\mathbb T))$ in \eqref{eq:AC_representation} is a $^*$-isomorphism between the involutive algebras $\ell^{1}(\Sigma)$ and $M_p(\AC (\mathbb T))$, and a linear homeomorphism of Banach spaces.
\end{theorem}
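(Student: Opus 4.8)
The plan is to build directly on the map $\Psi$ produced in the proof of Theorem~\ref{thm:free_self_adjoint}. There it was already shown that $\Psi:\ell^1(\Sigma)\to M_p(\AC(\mathbb T))$, given explicitly by \eqref{eq:AC_representation}, is a unital contractive (hence continuous) surjective $^*$-homomorphism. Since the set-theoretic inverse of a bijective $^*$-homomorphism is again a $^*$-homomorphism, once $\Psi$ is known to be bijective it is automatically a $^*$-isomorphism of the involutive algebras. Thus the entire theorem reduces to two additional points: injectivity of $\Psi$, and continuity of $\Psi^{-1}$.

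For injectivity I would exploit the stronger hypothesis that $X$ equals the full orbit $\{x_0,\sigma x_0,\ldots,\sigma^{p-1}x_0\}$. As recorded in the surjectivity argument of Theorem~\ref{thm:free_self_adjoint}, each scalar $a_k(\sigma^j x_0)$, for $k\in\mathbb Z$ and $j=0,\ldots,p-1$, occurs exactly once as a Fourier coefficient of exactly one entry of the matrix $\Psi(\sum_k a_k\delta^k)$ in \eqref{eq:AC_representation}. Hence $\Psi(a)=0$ forces $a_k(\sigma^j x_0)=0$ for all $k$ and all $j$. Because $X$ consists of precisely these $p$ points, a function $a_k\in C(X)$ is completely determined by its values on them, so $a_k=0$ for every $k$, i.e.\ $a=0$. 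This is really the only genuinely new ingredient beyond Theorem~\ref{thm:free_self_adjoint}, and it is exactly where the single-orbit hypothesis --- rather than the mere presence of a periodic point --- enters.

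For the homeomorphism statement the cleanest route is the open mapping theorem. Both $\ell^1(\Sigma)$ and $M_p(\AC(\mathbb T))$ are Banach spaces: the former by definition, and the latter because $\AC(\mathbb T)\cong\ell^1(\mathbb Z)$ is complete and $M_p$ of a Banach space, equipped with the maximum-entry norm, is again complete. As $\Psi$ is a continuous linear bijection between Banach spaces, the open mapping theorem immediately gives that $\Psi^{-1}$ is continuous, so $\Psi$ is a linear homeomorphism. Should one prefer a self-contained estimate, one can instead bound $\Psi$ below explicitly: on the finite set $X$ one has $\|a_k\|_\infty=\max_{0\le j\le p-1}|a_k(\sigma^j x_0)|\le\sum_{j=0}^{p-1}|a_k(\sigma^j x_0)|$, and summing over $k$ while regrouping, for each fixed $j$, the coefficients of the $p$ entries of row $j+1$ yields $\|a\|\le\sum_{j=0}^{p-1}\sum_k|a_k(\sigma^j x_0)|\le p^2\|\Psi(a)\|$, since each of those $p$ entries has $\AC$-norm at most $\|\Psi(a)\|$. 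Combined with contractivity, this bounds $\Psi$ both above and below and again yields the homeomorphism.

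I expect the injectivity step to be the conceptual crux, even though it is short: it is precisely the point where the one-orbit hypothesis turns the surjection of Theorem~\ref{thm:free_self_adjoint} into a bijection. Everything else is essentially bookkeeping, the only real choice being whether to invoke the open mapping theorem or to supply the explicit lower bound on $\Psi$.
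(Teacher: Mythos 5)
Your proposal is correct and follows essentially the same route as the paper, which simply observes that the surjective contractive $^*$-homomorphism $\Psi$ constructed in the proof of Theorem~\ref{thm:free_self_adjoint} is ``evidently also injective'' under the one-orbit hypothesis; you merely spell out that injectivity argument (each $a_k(\sigma^j x_0)$ appears exactly once as a Fourier coefficient, and these values determine $a_k$ since $X$ is the orbit) and supply the standard open-mapping-theorem step, or equivalently the explicit lower bound $\Vert a\Vert\leq p^2\Vert\Psi(a)\Vert$, which the paper leaves implicit. No gaps.
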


Theorem~\ref{thm:structure_theorem_for_one_orbit} will be used in determining when $\ell^1(\Sigma)$ is ($^*$-)prime in Theorem~\ref{thm:prime_transitive}. It also has the following consequence on the Hermitian nature of $\ell^1(\Sigma)$ for finite $X$. It is an open question whether $\ell^1(\Sigma)$ is always Hermitian, or, if not, which conditions on the dynamics are equivalent with this property.

\begin{theorem}\label{thm:finite_hermitian}
If $X$ consists of a finite number of points, then $\ell^1(\Sigma)$ is Hermitian.
\end{theorem}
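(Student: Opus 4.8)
The plan is to reduce the statement to the single-orbit case already handled by Theorem~\ref{thm:structure_theorem_for_one_orbit}, and there to exploit that $\AC(\mathbb{T})$ is inverse-closed in $C(\mathbb{T})$ by Wiener's classical theorem. First I would observe that, since $X$ is finite, it is the disjoint union $X = O_1 \cup \cdots \cup O_m$ of finitely many $\sigma$-orbits, each of which is closed and invariant under $\sigma$ and $\sigma^{-1}$. Writing $\Sigma_i = (O_i, \sigma|_{O_i})$, the restriction $^*$-homomorphisms $\sum_k f_k \delta^k \mapsto \sum_k f_k|_{O_i}\delta_i^k$ assemble into a bijective $^*$-homomorphism $\ell^1(\Sigma) \to \bigoplus_{i=1}^m \ell^1(\Sigma_i)$, which is a linear homeomorphism because the two norms are equivalent (they agree up to the factor $m$ coming from $\|f_k\|_\infty = \max_i \|f_k|_{O_i}\|_\infty$). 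As being Hermitian is a property of the spectra of self-adjoint elements, it is preserved under topological $^*$-isomorphisms, and for a finite product it suffices that each factor be Hermitian, since an element of a product is self-adjoint exactly when all its components are, and its spectrum is the union of the component spectra. Thus it is enough to prove that each $\ell^1(\Sigma_i)$ is Hermitian. By Theorem~\ref{thm:structure_theorem_for_one_orbit}, $\ell^1(\Sigma_i)$ is $^*$-isomorphic and linearly homeomorphic to $M_{p_i}(\AC(\mathbb{T}))$, so the whole matter reduces to showing that $M_p(\AC(\mathbb{T}))$ is Hermitian for every $p \geq 1$.

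For this core step I would compare $M_p(\AC(\mathbb{T}))$ with the $C^*$-algebra $M_p(C(\mathbb{T})) \cong C(\mathbb{T}, M_p(\mathbb{C}))$ in which it sits as a $^*$-subalgebra, the involution on the former being the restriction of the $C^*$-involution on the latter. The key is that $M_p(\AC(\mathbb{T}))$ is \emph{inverse-closed} (spectrally invariant) in $M_p(C(\mathbb{T}))$: if $T \in M_p(\AC(\mathbb{T}))$ is invertible in $M_p(C(\mathbb{T}))$, then $\det T(z) \neq 0$ for all $z \in \mathbb{T}$; since $\AC(\mathbb{T})$ is an algebra, $\det T$ and all entries of the adjugate $\operatorname{adj}(T)$ lie in $\AC(\mathbb{T})$; Wiener's theorem then gives $(\det T)^{-1} \in \AC(\mathbb{T})$, whence Cramer's rule $T^{-1} = (\det T)^{-1}\operatorname{adj}(T)$ exhibits the inverse inside $M_p(\AC(\mathbb{T}))$. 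Consequently the spectrum of any element of $M_p(\AC(\mathbb{T}))$ computed in this algebra coincides with its spectrum computed in $M_p(C(\mathbb{T}))$. In particular, a self-adjoint $T = T^* \in M_p(\AC(\mathbb{T}))$ is self-adjoint in the $C^*$-algebra $M_p(C(\mathbb{T}))$ and therefore has real spectrum there, hence real spectrum in $M_p(\AC(\mathbb{T}))$. This shows $M_p(\AC(\mathbb{T}))$ is Hermitian and completes the argument.

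I expect the main obstacle to be precisely this inverse-closedness step, i.e.\ upgrading invertibility of $T$ in the $C^*$-completion to invertibility in $M_p(\AC(\mathbb{T}))$; the point is to recognise that Cramer's rule reduces the matrix problem to the single scalar inversion of $\det T$, which is exactly where Wiener's classical theorem enters. The remaining items are routine but should be checked: that the orbit decomposition really is a topological $^*$-isomorphism onto the finite product (so that the equivalence of norms, and hence preservation of spectra, is legitimate), that the involution on $M_p(\AC(\mathbb{T}))$ is indeed the restriction of the $C^*$-involution so that self-adjointness transfers, and that $\det T$ and $\operatorname{adj}(T)$ genuinely have entries in $\AC(\mathbb{T})$ because $\AC(\mathbb{T})$ is closed under the finite sums and products occurring in these polynomial expressions.
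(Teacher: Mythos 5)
Your proposal is correct and follows essentially the same route as the paper: decompose along the finitely many orbits, invoke Theorem~\ref{thm:structure_theorem_for_one_orbit} to reduce to $M_p(\AC(\mathbb{T}))$, and use Wiener's theorem applied to the determinant (together with the adjugate/Cramer's rule) to see that invertibility there is detected pointwise, so that self-adjoint elements have real spectrum. Your phrasing via inverse-closedness in the $C^*$-algebra $M_p(C(\mathbb{T}))$ is just a repackaging of the paper's statement that the spectrum is the union of the spectra of the evaluations $\textup{ev}_z(a)$ in $M_p(\mathbb{C})$.
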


\begin{proof}
Using a direct sum decomposition of $\ell^1(\Sigma)$ corresponding to the orbits in $X$, one sees that we may assume that $X$ consists of one finite orbit of $p\geq 1$ elements. In that case, Theorem~\ref{thm:structure_theorem_for_one_orbit} implies that we may just as well show that $M_p(\AC(\mathbb{T}))$ is Hermitian. Hence, let $a$ be a self-adjoint element of $M_p(\AC(\mathbb{T}))$; we must show that its spectrum in $M_p(\AC(\mathbb{T}))$ is real. Now, for every $ z\in\mathbb T$, there is a natural $^*$-homomorphism $\textup{ev}_z:M_p(\AC (\mathbb T))\to\mathbb C$, corresponding to evaluation at $z$, and we claim that, for still arbitrary $a\in M_p(\AC(\mathbb{T}))$, $a$ is invertible in $M_p(\AC(\mathbb{T}))$ precisely when $\textup{ev}_z(a)$ is invertible in $M_p(\mathbb C)$, for all $z\in\mathbb T$. Indeed, invertibility of $a$ in $M_p(\AC(\mathbb{T}))$ surely implies invertibility of $\textup{ev}_z(a)$ in $M_p(\mathbb C)$. As to the converse: if the determinant of $\textup{ev}_z(a)$ is non-zero, for every $z\in\mathbb T$, then $z\mapsto\det(\textup{ev}_z(a))$ is an element of $\AC(\mathbb{T})$ having no zero on $\mathbb T$. By Wiener's classical result, its reciprocal is in $\AC(\mathbb{T})$ again, and this shows that the pointwise inverses of $\textup{ev}_z(a)$ in $M_p(\mathbb C)$, for $z\in\mathbb T$, combine to the inverse of $a$ in $M_p(\AC(\mathbb{T}))$. This establishes our claim. We conclude that, for arbitrary $a\in M_p(\AC(\mathbb{T}))$, the spectrum of $a$ in $M_p(\AC(\mathbb{T}))$ is the union of the spectra of the $\textup{ev}_z(a)$ in $M_p(\mathbb C)$, as $z$ ranges over $\mathbb T$. If $a$ is self-adjoint, then so are the $\textup{ev}_z(a)$, for all $z\in\mathbb T$. Hence these spectra are all real and the same therefore holds for the spectrum of $a$.
\end{proof}

\begin{remark}\label{rem:tensorproduct}
It is an open question when a tensor product of two unital Hermitian Banach algebras is Hermitian again. It is known to hold when at least one of the factors is commutative \cite[Theorem~34.15]{fragoulopoulou}. Our proof of Theorem~\ref{thm:finite_hermitian}, where the spectrum of a self-adjoint element is seen to be real because it is the union of spectra which are known to be real, bears some resemblance to the proof of this fact in \cite[Theorem~34.15 and Theorem~31.20.(2)]{fragoulopoulou}.
\end{remark}

We conclude this section by determining when $\ell^1(\Sigma)$ is ($^*$-)prime in Theorem~\ref{thm:prime_transitive}, using Theorem~\ref{thm:structure_theorem_for_one_orbit} as an ingredient. As a preparation, we need the following two topological lemmas.

\begin{lemma}\label{lem:not_transitive_open_sets} The following are equivalent:
\begin{enumerate}
\item There exist two disjoint non-empty open subsets $O_1$ and $O_2$ of $X$, both invariant under $\sigma$ and its inverse, such that $\overline{O_1} \cup \overline{O_2} = X$;
\item $\Sigma$ is not topologically transitive.
\end{enumerate}
\end{lemma}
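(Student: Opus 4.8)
The plan is to prove the two implications separately, with the forward implication (1) $\Rightarrow$ (2) being essentially immediate and the reverse implication (2) $\Rightarrow$ (1) carrying the content. For (1) $\Rightarrow$ (2), given disjoint non-empty open sets $O_1, O_2$, both invariant under $\sigma$ and its inverse, I would simply exhibit the pair $U = O_1$ and $V = O_2$ as witnessing the failure of topological transitivity. Since $O_1$ is invariant, $\sigma^n(U) = O_1$ for every $n \in \mathbb{Z}$, and the disjointness of $O_1$ and $O_2$ then gives $\sigma^n(U) \cap V = \emptyset$ for all $n$. I note that the hypothesis $\overline{O_1} \cup \overline{O_2} = X$ plays no role in this direction.

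For (2) $\Rightarrow$ (1), I would start from non-transitivity to obtain non-empty open sets $U, V$ with $\sigma^n(U) \cap V = \emptyset$ for all $n \in \mathbb{Z}$. The key construction is to set $O_1 = \bigcup_{n \in \mathbb{Z}} \sigma^n(U)$, which is open (as $\sigma$ is a homeomorphism), non-empty, and manifestly invariant under $\sigma$ and its inverse, and then to put $O_2 = (\overline{O_1})^c$. Since $\sigma$ is a homeomorphism and $O_1$ is invariant, $\overline{O_1}$ is invariant, hence so is its open complement $O_2$. Disjointness $O_1 \cap O_2 = \emptyset$ is immediate from $O_1 \subset \overline{O_1}$.

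The two points requiring a short argument are the non-emptiness of $O_2$ and the closure identity. For non-emptiness, the hypothesis gives $O_1 \cap V = \emptyset$, and since $V$ is open, no point $v \in V$ can lie in $\overline{O_1}$ (its own open neighbourhood $V$ misses $O_1$), so $V \subset O_2$ and in particular $O_2 \neq \emptyset$. For the closure identity, writing $C = \overline{O_1}$ one has $\overline{O_2} = \overline{C^c} \supset C^c$, whence $\overline{O_1} \cup \overline{O_2} \supset C \cup C^c = X$, giving the desired equality for free.

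I do not anticipate a serious obstacle here; this is elementary point-set topology once the orbit $\bigcup_{n} \sigma^n(U)$ is chosen as $O_1$ and its closure is used to manufacture $O_2$. The one step demanding a little care is the observation that an \emph{open} set $V$ disjoint from the open set $O_1$ is automatically disjoint from $\overline{O_1}$ as well; this is precisely what upgrades $V \subset O_2$ from a statement about interiors to genuine non-emptiness of $O_2$, and it is what allows the closures to cover $X$ without any extra hypothesis.
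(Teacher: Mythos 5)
Your proposal is correct and follows essentially the same route as the paper: the forward direction is immediate from invariance, and the reverse direction uses exactly the paper's construction $O_1=\bigcup_{n\in\mathbb Z}\sigma^n(U)$ and $O_2=(\overline{O_1})^c$, with the same observation that the open set $V$ disjoint from $O_1$ is disjoint from $\overline{O_1}$, hence contained in $O_2$.
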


\begin{proof}
The first statement evidently implies the second. As to the converse, if the system is not topologically transitive, then there exist non-empty open subsets $U,V$ of $X$ such that $\sigma^n (U) \cap V = \emptyset$, for all $n\in\mathbb Z$. Therefore $O_1 = \bigcup_{n \in \mathbb{Z}} \sigma^n (U)$ is a non-empty open set invariant under $\sigma$ and $\sigma^{-1}$, and $O_1\cap V=\emptyset$. As a consequence, $\overline{O_1}$ is a closed set invariant under $\sigma$ and $\sigma^{-1}$, and $\overline{O_1}\cap V=\emptyset$. It follows that $O_2 = {\overline{O_1}}^c$ is an open set, invariant under $\sigma$ and $\sigma^{-1}$, containing $V$, and hence non-empty. Since we even have $\overline{O_1} \cup O_2 = X$, the result follows.
\end{proof}

\begin{lemma}\label{lem:transitive_single_orbit}
If $\Sigma$ is topologically transitive, and there exists $n_0 \geq  1$ such that $X = \Fix_{n_0} (\sigma)$, then $X$ consists of a single orbit and is thus finite.
\end{lemma}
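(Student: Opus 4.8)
The plan is to argue by contradiction: assuming that $X$ consists of more than a single orbit, I would manufacture two disjoint non-empty open $\sigma$-invariant subsets of $X$, and then show that their existence is incompatible with topological transitivity. The starting observation is that the hypothesis $X=\Fix_{n_0}(\sigma)$ means $\sigma^{n_0}=\id$, so every orbit has cardinality dividing $n_0$ and is in particular finite; since $X$ is Hausdorff, every orbit is therefore also closed. Consequently, once we know that $X$ is a single orbit it is automatically finite, and it suffices to rule out the presence of two distinct orbits.

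Suppose then that $x_1,x_2\in X$ lie in distinct orbits $\mathcal{O}_1=\mathcal{O}_\sigma(x_1)$ and $\mathcal{O}_2=\mathcal{O}_\sigma(x_2)$. These are disjoint finite, hence closed, sets, so by normality of the compact Hausdorff space $X$ they can be separated by disjoint open sets $A\supset\mathcal{O}_1$ and $B\supset\mathcal{O}_2$. The crucial step is to promote $A$ and $B$ to invariant open sets without losing the respective orbits, and here I would exploit that $\sigma$ has finite order by setting $A'=\bigcap_{i=0}^{n_0-1}\sigma^i(A)$ and $B'=\bigcap_{i=0}^{n_0-1}\sigma^i(B)$. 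Each is a finite intersection of open sets, hence open, and since $\sigma^{n_0}=\id$ merely permutes the collection $\{\sigma^i(A)\}_{i=0}^{n_0-1}$ cyclically, one checks that $\sigma(A')=A'$, so that $A'$, and likewise $B'$, is invariant under $\sigma$ and $\sigma^{-1}$. Because $\mathcal{O}_1$ is invariant we have $\mathcal{O}_1=\sigma^i(\mathcal{O}_1)\subset\sigma^i(A)$ for every $i$, whence $\mathcal{O}_1\subset A'$; in particular $A'$ is non-empty, and similarly $\mathcal{O}_2\subset B'$. Finally $A'\cap B'\subset A\cap B=\emptyset$.

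It then remains to see that two disjoint non-empty open invariant sets cannot coexist under topological transitivity, and I would deduce this from Lemma~\ref{lem:not_transitive_open_sets}. The key intermediate claim is that every non-empty open invariant set $O$ is dense: if it were not, then $O$ and $(\overline{O})^c$ would be disjoint non-empty open invariant sets (the invariance of $\overline{O}$ following from that of $O$, since $\sigma$ is a homeomorphism) whose closures cover $X$, so Lemma~\ref{lem:not_transitive_open_sets} would force $\Sigma$ not to be topologically transitive, contrary to hypothesis. Applying this to $A'$ shows that $A'$ is dense, so it must meet the non-empty open set $B'$, contradicting $A'\cap B'=\emptyset$. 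Hence no two distinct orbits exist, $X$ is a single orbit, and is finite. The main technical point, and the place where the hypothesis $X=\Fix_{n_0}(\sigma)$ is genuinely used, is the invariantisation $A\mapsto\bigcap_{i=0}^{n_0-1}\sigma^i(A)$: it produces an \emph{open} invariant neighbourhood of a closed invariant set precisely because only finitely many translates are involved, which is exactly what fails for a general homeomorphism of infinite order.
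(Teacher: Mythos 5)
Your proof is correct and follows essentially the same strategy as the paper's: both exploit $\sigma^{n_0}=\id$ to manufacture an open $\sigma$-invariant neighbourhood of an orbit out of finitely many translates of a separating open set, and then contradict topological transitivity. The only differences are cosmetic --- the paper separates a single point $y$ from the finite orbit of $x$ and contradicts transitivity directly with $U=W_x$ and $V=V_y$, whereas you invariantise both sides via $A'=\bigcap_{i=0}^{n_0-1}\sigma^i(A)$ and route the final contradiction through Lemma~\ref{lem:not_transitive_open_sets}, a detour you could skip since two disjoint non-empty open invariant sets $A',B'$ already satisfy $\sigma^n(A')\cap B'=\emptyset$ for all $n$.
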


\begin{proof}
Fix $x\in X$ and assume that some $y\in X$ is not in the orbit of $x$. Then there exist an open neighbourhood $V_y$ of $y$, and an open neighbourhood $V_k$ of $\sigma^k x$, for $k=0,\ldots,n_0-1$, such that $V_k\cap V_y=\emptyset$, for $k=0,\ldots,n_0-1$. Define the open neighbourhood
\[
U_x = V_0 \cap \sigma^{-1} (V_1) \cap \sigma^{-2} (V_2) \cap \ldots \cap \sigma^{-(n_0 -1)} (V_{n_0-1})
\]
of $x$, and consider the non-empty open subset $W_x  = \cup_{i=0}^{n_0 -1} \sigma^i (U_x)$ of $X$. If $z\in W_x\cap V_y$, say $z\in\sigma^{i_0}(U_x)\cap V_y$, for some $0\leq i_0\leq n_0-1$, then
\[
z\in\sigma^{i_0}(V_0) \cap \sigma^{i_0-1} (V_1) \cap \sigma^{i_0-2} (V_2) \cap \ldots \cap \sigma^{i_0-(n_0 -1)} (V_{n_0-1})\cap V_y\subset V_{i_0}\cap V_y,
\]
contradicting that $V_{i_0}\cap V_y=\emptyset$. Hence $W_x\cap V_y=\emptyset$. Furthermore, since $\sigma^{n_0}=\id_X$, $W_x$ is invariant under $\sigma$ and $\sigma^{-1}$, so that $\sigma^n(W_x)\cap V_y=W_x\cap V_y=\emptyset$, for all $n\in\mathbb Z$. This contradicts that $\Sigma$ is topologically free, and hence $X$ must coincide with the orbit of $x$.
\end{proof}

\begin{theorem}\label{thm:prime_transitive} The following are equivalent:
\begin{enumerate}
\item If $I_1$ and $I_2$ are two non-zero closed ideals of $\ell^1(\Sigma)$, then $I_1\cap I_2\neq \{0\}$.
\item If $I_1$ and $I_2$ are two non-zero self-adjoint closed ideals of $\ell^1(\Sigma)$, then $I_1\cap I_2\neq \{0\}$.
\item $X$ has an infinite number of points, and $\Sigma$ is topologically transitive.
\end{enumerate}
\end{theorem}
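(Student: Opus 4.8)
The plan is to prove the three conditions equivalent cyclically, via $(1)\Rightarrow(2)\Rightarrow(3)\Rightarrow(1)$. The implication $(1)\Rightarrow(2)$ is immediate, since every self-adjoint closed ideal is in particular a closed ideal. The substance lies in the other two implications, and both will hinge on translating ideal-theoretic statements about $\ell^1(\Sigma)$ into statements about invariant subsets of $X$, the sole exception being the finite case, where the structure theorem is indispensable.

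For $(3)\Rightarrow(1)$, I would first observe that the hypotheses force $\Sigma$ to be topologically free. Indeed, if not, then $\Fix_{n_0}(\sigma)^\circ\neq\emptyset$ for some $n_0\geq 1$ by Lemma~\ref{lem:topological}(1); this interior is a non-empty open $\sigma$-invariant set, and topological transitivity forces every non-empty open invariant set $O$ to be dense, since $\sigma^n(O)=O$ means $O$ meets every non-empty open set. Being dense and contained in the closed set $\Fix_{n_0}(\sigma)$, it would give $X=\Fix_{n_0}(\sigma)$, whence $X$ is a single finite orbit by Lemma~\ref{lem:transitive_single_orbit}, contradicting that $X$ is infinite. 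With topological freeness in hand, Theorem~\ref{thm:main_four_equivalences} applies: for non-zero closed ideals $I_1,I_2$, each $I_j\cap C(X)$ is a non-zero closed ideal of $C(X)$, invariant under $\alpha$ and $\alpha^{-1}$, hence of the form $\{f\in C(X):f_{\upharpoonright S_j}=0\}$ for a proper closed $\sigma$-invariant set $S_j\subsetneq X$. Then $S_j^c$ is a non-empty open invariant set, so topological transitivity applied to $S_1^c$ and $S_2^c$ yields $S_1^c\cap S_2^c\neq\emptyset$; choosing a non-zero $f\in C(X)$ supported in this open set gives $f\in(I_1\cap C(X))\cap(I_2\cap C(X))\subset I_1\cap I_2$, proving $(1)$.

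For $(2)\Rightarrow(3)$, I would argue the contrapositive and split $\neg(3)$ into two exhaustive cases. If $\Sigma$ is not topologically transitive, Lemma~\ref{lem:not_transitive_open_sets} supplies disjoint non-empty open invariant sets $O_1,O_2$ with $\overline{O_1}\cup\overline{O_2}=X$. For a closed invariant $S$, the set $I_S$ of all $a=\sum_k a_k\delta^k$ with $\supp(a_k)\subset S^c$ for every $k$ is readily checked to be a self-adjoint closed ideal, invariance of $S$ being exactly what makes it two-sided and self-adjoint. Taking $S_j=\overline{O_j}$ one gets $I_{\overline{O_1}}\cap I_{\overline{O_2}}=\{0\}$ since $\overline{O_1}\cup\overline{O_2}=X$, while each $I_{\overline{O_j}}$ is non-zero because $\overline{O_j}\neq X$ (a dense $O_j$ would meet the disjoint $O_{3-j}$), contradicting $(2)$. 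The remaining case is $X$ finite and $\Sigma$ topologically transitive; then $X=\Fix_{n_0}(\sigma)$ for suitable $n_0$, so Lemma~\ref{lem:transitive_single_orbit} makes $X$ a single finite orbit of some length $p$, and Theorem~\ref{thm:structure_theorem_for_one_orbit} gives a $^*$-isomorphism $\ell^1(\Sigma)\cong M_p(\AC(\mathbb{T}))$. Here I would exhibit two disjoint ideals internally: for proper closed sets $E_1,E_2\subsetneq\mathbb{T}$ with $E_1\cup E_2=\mathbb{T}$, the ideals $M_p(\{f\in\AC(\mathbb{T}):f_{\upharpoonright E_j}=0\})$ are self-adjoint and closed, non-zero (a smooth bump supported off $E_j$ has absolutely convergent Fourier series, hence lies in $\AC(\mathbb{T})$), and meet only in $\{0\}$; pulling back along the isomorphism contradicts $(2)$ once more.

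The main obstacle I anticipate is this finite, topologically transitive case of $(2)\Rightarrow(3)$: unlike the non-transitive case, the two disjoint ideals cannot arise from invariant subsets of $X$, since a single orbit admits none, and one must instead exploit the internal ideal structure of $\AC(\mathbb{T})$ through the structure theorem, verifying carefully that the chosen ideals are simultaneously closed, two-sided, self-adjoint, and non-zero in $M_p(\AC(\mathbb{T}))$. A secondary point deserving attention is checking that the two sub-cases genuinely exhaust $\neg(3)$ and that the passage between ideals of $\ell^1(\Sigma)$ and ideals of $C(X)$, respectively of $M_p(\AC(\mathbb{T}))$, preserves all the relevant structure.
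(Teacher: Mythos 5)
Your proposal is correct and follows essentially the same route as the paper's proof: the same case split for $\neg(3)$, the same appeals to Lemmas~\ref{lem:not_transitive_open_sets} and~\ref{lem:transitive_single_orbit}, Theorem~\ref{thm:main_four_equivalences} and Theorem~\ref{thm:structure_theorem_for_one_orbit}, and the same ideal constructions (the paper verifies non-triviality of the $\AC(\mathbb T)$-ideals via regularity of $\ell^1(\mathbb Z)$ rather than via smooth bump functions, and runs $(3)\Rightarrow(1)$ as a proof by contradiction rather than your direct argument, but these differences are immaterial). One small correction: the ideal $I_S$ should be defined by the condition $a_k\upharpoonright_S=0$ for all $k$ (i.e.\ $a_k\in\ker(S)$, as in the paper) rather than by $\supp(a_k)\subset S^c$, since the latter set is not closed under uniform limits (a limit of functions supported in the open set $S^c$ need only vanish on $S$) and is not obviously stable under the convolution product; with the vanishing condition $I_S$ is indeed a non-zero self-adjoint closed ideal and your zero-intersection argument goes through unchanged.
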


\begin{proof}
Evidently (1) implies (2). Assuming (2), we will show that (3) holds, and we start by showing that $\Sigma$ must be topologically transitive. Indeed, if not, then, by Lemma~\ref{lem:not_transitive_open_sets}, there exist two disjoint non-empty open sets $O_1$ and $O_2$, both invariant under $\sigma$ and $\sigma^{-1}$, and such that $\overline{O_1} \cup \overline{O_2} = X$. For $i=1,2$, let
\[
I_i = \left\{\sum_{k\in\mathbb Z} a_k \delta^k \in \ell^1 (\Sigma) : f_k \in \ker(\overline{O_i}) \textup{ for all } n\right\},
\]
where, for $S\subset X$, $\ker (S)=\{f\in C(X) : f\upharpoonright_S=0\}$. Then $I_1$ and $I_2$ are non-zero self-adjoint closed ideals of $\ell^1 (\Sigma)$. Since $E(I_i) = \ker(\overline{O_i})$, for $i=1,2$, we have
\[
E(I_1 \cap I_2) \subset E(I_1) \cap E(I_2) = \ker(\overline{O_1}) \cap \ker(\overline{O_2})= \ker(\overline{O_1} \cup \overline{O_2})= \ker(X) = \{0\},
\]
from which it follows that $I_1 \cap I_2 = \{0\}$. This contradicts assumption (2), and hence $\Sigma$ must be topologically transitive. Having established this, suppose that $X$ is finite. As a consequence of Lemma~\ref{lem:transitive_single_orbit}, $X$ then consists of one finite orbit of, say, $p\geq 1$ elements. Consequently, by Theorem~\ref{thm:structure_theorem_for_one_orbit}, $\ell^1(\Sigma)$ is isomorphic, as Banach space and as involutive algebra, with $M_p(\AC (\mathbb T))$. This is a contradiction, since $M_p(\AC (\mathbb T))$ has two non-zero self-adjoint closed ideals with zero intersection. In order to see this, consider, for every $ z\in\mathbb T$, the natural continuous $^*$-homomorphism $\textup{ev}_z:M_p(\AC (\mathbb T))\to\mathbb C$, corresponding to evaluation at $z$. Choose two non-empty proper closed subsets $C_1$ and $C_2$ of $\mathbb T$ such that $C_1\cup C_2=\mathbb T$, and define $I_i=\bigcap_{z\in C_i}\ker(\textup{ev}_z)$, for $i=1,2$. Then clearly $I_1$ and $I_2$ are self-adjoint closed ideals of $M_p(\AC (\mathbb T))$, and $I_1\cap I_2=\{0\}$. However, $I_i\neq\{0\}$, for $i=1,2$. Indeed, since, by \cite[Corollary~7.2.3]{larsen}, $\ell^1(\mathbb Z)$ is a regular Banach algebra, by \cite[Theorem~7.1.2]{larsen} there exists, for $i=1,2$, a non-zero $f_i\in\AC(\mathbb T)$ which vanishes at $C_i$. The corresponding diagonal matrices, for example, then show that indeed $I_i\neq\{0\}$, for $i=1,2$. This establishes our claim about $M_p(\AC (\mathbb T))$, and hence the first statement implies the second.

In order to show that (3) implies (1), suppose that $X$ is infinite and that $\Sigma$ is topologically transitive. To start with, we claim that $\Sigma$ is topologically free. If not, then, by the first part of Lemma~\ref{lem:topological}, there is an integer $n_0 \geq 1$ such that $\Fix_{n_0} (\sigma)$ has non-empty interior. Since $\Fix_{n_0} (\sigma)$ is invariant under $\sigma$ and $\sigma^{-1}$, we conclude that $\Fix_{n_0} (\sigma)^\circ$ is a non-empty open subset of $X$, invariant under $\sigma$ and $\sigma^{-1}$. Moreover, we cannot have $\Fix_{n_0}(\sigma)=X$, since then Lemma~\ref{lem:transitive_single_orbit} would imply that $X$ is finite. Hence $\Fix_{n_0}(\sigma)^c$ is a non-empty open subset of $X$. Since then $\sigma^n(\Fix_{n_0}(\sigma)^\circ)\cap \Fix_{n_0}(\sigma)^c=\Fix_{n_0}(\sigma)^\circ\cap \Fix_{n_0}(\sigma)^c=\emptyset$, for all $n\in\mathbb Z$, this would contradict the topological transitivity of $\Sigma$.
Hence $\Sigma$ is topologically free, as claimed. Now let $I_1$ and $I_2$ be two non-zero closed ideals of $\ell^1 (\Sigma)$, and suppose that $I_1 \cap I_2 = \{0\}$. Then $I_1$ and $I_2$ are both proper ideals. Hence $I_1 \cap C(X)$ and $I_2 \cap C(X)$ are proper closed ideals of $C(X)$, obviously with zero intersection, and invariant under $\alpha$ and its inverse. Topologically freeness of $\Sigma$ implies that $I_1 \cap C(X)$ and $I_2 \cap C(X)$ are both non-zero, by Theorem~\ref{thm:main_four_equivalences}. We conclude that there exist proper non-empty closed subsets $C_1, C_2$ of $X$, invariant under $\sigma$ and its inverse, such that $I_1 \cap C(X) = \ker(C_1)$, and $I_2 \cap C(X) = \ker(C_2)$, where, for $S\subset X$, $\ker (S)=\{f\in C(X) : f\upharpoonright_S=0\}$. We note that $\{0\} = I_1 \cap I_2 \cap C(X) = \ker(C_1) \cap \ker (C_2) = \ker(C_1 \cup C_2)$, whence $C_1 \cup C_2 = X$. This implies, using that $C_2$ is proper and closed, that $C_1^\circ \supset C_2^c \neq \emptyset$. Hence $C_1^\circ$ is a non-empty open subset of $X$, invariant under $\sigma$ and its inverse. Furthermore, since $C_1$ is proper and closed, $C_1^c$ is open and non-empty. Hence $\sigma^n (C_1^\circ) \cap C_1^c = C_1^\circ\cap C_1^c=\emptyset$, for all $n\in\mathbb Z$. This contradicts the topological transitivity of $\Sigma$, and we conclude that we must have $I_1 \cap I_2 \neq \{0\}$. Hence (1) holds.
\end{proof}

The $C^*$-analogue of the previous result is \cite[Theorem 5.5]{tomiyama_seoul_notes}; the corresponding result for $c_{00}(\Sigma)$ follows from \cite[Theorem~7.6]{SSdJ_Lund}.

\subsection*{Acknowledgements}

This work was supported by a visitor's grant of the Netherlands Organisation for Scientific Research (NWO).


\begin{thebibliography}{99}

\bibitem{davidson} Davidson, K.R., \emph{$C^*$-algebras by example}, Fields Institute Monographs, No.\ 6, Amer.\ Math.\ Soc., Providence, RI, 1996.

\bibitem{fragoulopoulou} Fragoulopoulou, M., \emph{Topological algebras with involution}, North-Holland Mathematics Studies, {\bf 200}, Elsevier Science B.V., Amsterdam, 2005.

\bibitem{kaniuth} Kaniuth, E., \emph{A course in commutative Banach algebras}, Springer, New York, 2009.

\bibitem{kelley} Kelly, J.L., \emph{General topology}, Springer-Verlag, New York-Heidelberg-Berlin, 1975.

\bibitem{larsen} Larsen, R., \emph{Banach algebras. An introduction}. Pure and Applied Mathematics, No.\ 24, Marcel Dekker, Inc., New York, 1973.

\bibitem{palmer} Palmer, Th.W., \emph{Banach algebras and the general theory of $^*$-algebras. Vol.\ 2. $^*$-algebras}, Encyclopedia of Mathematics and its Applications, {\bf 79}, Cambridge University Press, Cambridge, 2001.

\bibitem{power} Power, S.C., \emph{Simplicity of $C^*$-algebras of minimal dynamical systems}, J.\ London Math.\ Soc.\ \textbf{18} (1978), 534-538.

\bibitem{rudin_fourier_analysis} Rudin, W., \emph{Fourier analysis on groups}, Interscience Publishers, New York, London, 1962.

\bibitem{rudin_functional_analysis} Rudin, W., \emph{Functional analysis (2nd Ed.)}, McGraw-Hill, Singapore, 1991.

\bibitem{sakai} Sakai, S., \emph{$C^*$-algebras and $W^*$-algebras}, Springer, Berlin-Heidelberg, 1998.

\bibitem{SSdJ_IJM} Svensson, C., Silvestrov, S., de Jeu, M., \emph{Dynamical systems and commutants in crossed products}, Internat.\ J.\ Math.\ \textbf{18} (2007), 455-471.

\bibitem{SSdJ_Banff} Svensson, C., Silvestrov, S., de Jeu, M., \emph{Dynamical systems associated with crossed products}, Acta Appl.\ Math.\ \textbf{108} (2009), 547-559.

\bibitem{SSdJ_Lund}  Svensson, C., Silvestrov, S., de Jeu, M., \emph{Connections between dynamical systems and crossed products of Banach algebras by $\mathbb{Z}$}, in ``Methods of Spectral Analysis in Mathematical Physics, Conference on Operator theory, Analysis and Mathematical Physics (OTAMP) 2006, Lund, Sweden'', Janas, J., Kurasov, P., Laptev, A., Naboko, S., Stolz, G.\ (Eds.), Operator Theory: Advances and Applications \textbf{186}, Birkh\"auser, Basel, 2009, 391-401.

\bibitem{ST_JFA} Svensson, C., Tomiyama, J., \emph{On the commutant of $C(X)$ in $C^*$-crossed products by $\mathbb{Z}$ and their representations}, J.\ Funct.\ Anal.\ \textbf{256} (2009), 2367-2386.

\bibitem{tomiyama_book} Tomiyama, J., \emph{Invitation to $C^*$-algebras and topological dynamics}, World Sci., Singapore, New Jersey, Hong Kong, 1987.

\bibitem{tomiyama_seoul_notes}  Tomiyama, J., \emph{The interplay between topological dynamics and theory of $C^*$-algebras}, Lecture Note no.2, Global Anal.\ Research Center, Seoul, 1992.

\bibitem{tomiyama_rev_math_phys} Tomiyama, J., \emph{$C^*$-algebras and topological dynamical systems}, Rev.\ Math.\ Phys.\ {\bf 8} (1996), 741-760.

\bibitem{tomiyama_kyoto_notes} Tomiyama, J., \emph{The interplay between topological dynamics and theory of $C^*$-algebras. II}, Kyoto, 2000.

\bibitem{williams} Williams, D.P., \emph{Crossed products of $C{^*}$-algebras}, Mathematical Surveys and Monographs, No.\ 134, American Mathematical Society, Providence, RI, 2007.

\end{thebibliography}
\end{document}